





\documentclass[sn-mathphys]{sn-jnl}

 \usepackage{amssymb}
\usepackage{epstopdf}

\jyear{2021}%

\theoremstyle{thmstyleone}%
\newtheorem{theorem}{Theorem}
%

\theoremstyle{thmstyletwo}%
\newtheorem{remark}{Remark}%

\theoremstyle{thmstylethree}%
\newtheorem{definition}{Definition}%
\newtheorem{lemma}{Lemma}
\raggedbottom

\begin{document}

\title[Greedy capped nonlinear Kaczmarz methods]{Greedy capped nonlinear Kaczmarz methods}


\author[1]{\fnm{Yanjun} \sur{Zhang}}

\author*[1]{\fnm{Hanyu} \sur{Li}}\email{lihy.hy@gmail.com or hyli@cqu.edu.cn}

\affil[1]{\orgdiv{College of Mathematics and Statistics}, \orgname{Chongqing University}, \orgaddress{ \city{Chongqing}, \postcode{401331}, \state{P.R.}, \country{China}}}


\abstract{To solve nonlinear problems, 
we construct two kinds of greedy capped nonlinear Kaczmarz methods by setting a capped threshold and introducing an effective probability criterion for selecting a row of the Jacobian matrix. The capped threshold and probability criterion are mainly determined by the maximum residual and maximum distance rules. The block versions of the new methods are also presented. 
We provide the convergence analysis of these methods and their numerical results behave quite well.}

\keywords{Nonlinear Kaczmarz, Greed, Maximum residual rule, Maximum distance rule, Nonlinear problems}


\maketitle

\section{Introduction}
\label{sec;introduction}
Consider the nonlinear system
\begin{align}
f(x)=0,  \label{Sec11}
\end{align}
where $f:\mathbb{R}^{n}\rightarrow\mathbb{R}^{m}$ and $x\in\mathbb{R}^{n}$ is an unknown variable. We assume throughout that $f(x)=[f_{1}(x), \cdots, f_{m}(x)]^T\in\mathbb{R}^{m}$ is a continuously differentiable vector-valued function, and there exists a solution $x_\star$ satisfying (\ref{Sec11}), i.e., $f(x_\star)=0$. Such problem is fundamental in numerical computing arising from many areas, e.g.,  machine learning \cite{chen2019homotopy}, differential equations \cite{hao2014bootstrapping} and optimization problems \cite{hao2018homotopy}.

There have been many studies on solving nonlinear problems by iterative methods \cite{dennis1977quasi,ortega2000iterative,yamashita2001rate}. Among them the nonlinear Kaczmarz method \cite{yuan2022sketched,zeng2020successive,wang2022nonlinear} is a typical representative for the so-called single sample-based method and can be formulated as
\begin{align}
x_{k+1}=x_{k}-\frac{f_{i_k}(x_k)}{\|\nabla f_{i_k}(x_k)\|^2_2}\nabla f_{i_k}(x_k).\label{Sec1.4}
\end{align}
In the nonlinear Kaczmarz iteration scheme, each iteration is formed by projecting the current point $x_k$ to the constraint set defined by $f_{i_k}(x_k)+\nabla f_{i_k}(x_k)^T(x-x_k)=0 $. 
After determining the iteration formula, it can be found that how to select the index $i_k$ is particularly important. At present, there are  mainly three different rules for selecting $i_k$ \cite{wang2022nonlinear} leading to three different methods: $(1)$ Nonlinear randomized Kaczmarz (NRK) method, where $i_k$ is randomly selected from $[ m ]$ with probability of
$
p_{i_k}= \frac{\lvert f_{i_k}(x_k)\rvert^2}{\|f(x_k)\|^2_2}
$; $(2)$ Nonlinear Kaczmarz (NK) method, where $i_k$ cyclically picks value from $[ m ]$; $(3)$ Nonlinear uniformly randomized Kaczmarz (NURK) method, where $i_k$ is randomly sampled from $[ m ]$ with equal probability. The theoretical analysis in \cite{wang2022nonlinear} shows that the convergence factors of the NRK and NURK methods are the same, however, the NRK method performs better in terms of the number of iterations and computing time in most cases.
It also usually outperforms 
the NK method. 
Upon close examination, we can find that the probability criterion used in the NRK method makes 
the method 
select the index corresponding to the larger entry of the residual vector as much as possible. However, it is possible for the NRK method to select an index with relatively small residual component at each iteration. This is inconsistent with the original intention of the NRK method and may lead to slower convergence.

In this paper, inspired by \cite{Bai2018,gower2021adaptive,zhang2020greedy}, where the capped threshold is introduced to ensure that relatively large component values are selected, we first propose the distance-residual capped nonlinear Kaczmarz (DR-CNK) method for solving the nonlinear problem (\ref{Sec11}). Here the capped threshold is mainly determined by the maximal distance rule which is discussed in \cite{zhang2022greedy} in detail. Similarly, the residual-distance capped nonlinear Kaczmarz (RD-CNK) method is also presented, whose capped threshold is mainly determined by the maximal residual rule \cite{zhang2022greedy}. We prove that the two new methods converge linearly in expectation with convergence factors being strictly smaller than that of the NRK and NURK methods presented in \cite{wang2022nonlinear}. Furthermore, two block versions, i.e., the multiple samples-based methods, are also proposed to accelerate our new methods.

The rest of this paper is organized as follows. Section \ref{sec:preliminaries} provides some preliminaries.  In Section \ref{sec: Single-methods}, we propose the DR-CNK and RD-CNK methods 
and their convergence analysis are given in Section \ref{sec:theory_Single-methods}. The block versions of the new methods are presented in Section \ref{sec: mutiple-methods} and the relevant convergence theorems are proved in Section \ref{sec:theory_mutiple-methods}. Experimental results are shown in Section \ref{sec:experiments}. Finally, we conclude the paper with some remarks.

\section{Preliminaries}
\label{sec:preliminaries}

 For a matrix $A=(A_{(i, j)})\in \mathbb{R}^{m\times n}$, $\sigma_{\max}(A)$, $\|A\|_2$, $\|A\|_F$, $A^{\dag}$, and $A_{\tau}$ denote its largest singular value, spectral norm, Frobenius norm,  Moore-Penrose pseudoinverse, and the restriction onto the row indices in the set $\tau$, respectively. We use $\lvert \tau \rvert$, $ \mathbb{E}^{k}$, and $ \mathbb{E}$ to denote the number of elements of a set $\mathcal{\tau}$, the conditional expectation conditioned on the first $k$ iterations, and the full expected value, respectively.  In addition, we let $[m]:=\{1, \cdots, m\}$ for an integer $m\geq 1$ and define $$ h_2(A)=\mathop{\text{inf}}\limits_{x\neq 0}\frac{\|Ax\|_2}{\|x\|_2},$$ and $$f^{\prime}(x)=[\nabla f_1(x), \cdots, \nabla f_m(x)]^T\in\mathbb{R}^{m\times n},$$ which is the Jacobian matrix of $f$ at $x$.

In addition, the following facts are necessary throughout the paper.
\begin{definition}[\cite{2017Kaczmarz}]\label{definition}
If for $i \in [ m ]$ and $\forall  x_{1}, x_{2} \in \mathbb{R}^{n}$, there exists $\eta_{i} \in[0, \eta)$ satisfying $\eta=\max\limits _{i}
\eta_{i}<\frac{1}{2}$ such that
\begin{align}
 \lvert f_{i}\left({x}_{1}\right)-f_{i}\left({x}_{2}\right)-\nabla f_{i}\left({x}_{1}\right)^T\left({x}_{1}-{x}_{2}\right) \rvert
\leq \eta_{i} \lvert f_{i}\left({x}_{1}\right)-f_{i}\left({x}_{2}\right) \rvert,\notag
\end{align}
then the function $f: \mathbb{R}^{n} \rightarrow \mathbb{R}^{m}$ is referred to satisfy the local tangential cone condition.
\end{definition}

\begin{lemma}[\cite{wang2022nonlinear}]\label{lemma1}
If the function $f$ satisfies the local tangential cone condition, then for $i_k \in [ m ]$, $\forall x_{1}, x_{2} \in \mathbb{R}^{n}$ and the updating formula (\ref{Sec1.4}), we have
\begin{align}
\left\|x_{k+1}-x_{\star}\right\|^{2}_2 \leq \left\|x_{k}-x_{\star}\right\|^{2}_2-\left(1-2 \eta_{i_k}\right)
\frac{\left\lvert f_{i_k}(x_k)\right\rvert^{2}}{\left\|\nabla f_{i_k}\left(x_{k}\right)\right\|_2^{2}}.\notag 
\end{align}
\end{lemma}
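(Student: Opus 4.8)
The plan is to expand the squared error $\|x_{k+1}-x_\star\|_2^2$ directly from the update rule (\ref{Sec1.4}) and then control the single resulting cross term using the local tangential cone condition from Definition \ref{definition}. First I would substitute $x_{k+1} = x_k - \frac{f_{i_k}(x_k)}{\|\nabla f_{i_k}(x_k)\|_2^2}\nabla f_{i_k}(x_k)$ and expand the square to obtain
\begin{align}
\|x_{k+1}-x_\star\|_2^2 = \|x_k - x_\star\|_2^2 - \frac{2 f_{i_k}(x_k)\,\nabla f_{i_k}(x_k)^T(x_k-x_\star)}{\|\nabla f_{i_k}(x_k)\|_2^2} + \frac{\lvert f_{i_k}(x_k)\rvert^2}{\|\nabla f_{i_k}(x_k)\|_2^2}. \notag
\end{align}

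Next I would invoke Definition \ref{definition} with $x_1 = x_k$ and $x_2 = x_\star$. Since $f(x_\star)=0$ forces $f_{i_k}(x_\star)=0$, the tangential cone inequality collapses to $\lvert f_{i_k}(x_k) - \nabla f_{i_k}(x_k)^T(x_k - x_\star)\rvert \le \eta_{i_k}\lvert f_{i_k}(x_k)\rvert$. Multiplying through by $f_{i_k}(x_k)$ and using $ab \le \lvert ab\rvert$ gives $f_{i_k}(x_k)^2 - f_{i_k}(x_k)\,\nabla f_{i_k}(x_k)^T(x_k - x_\star) \le \eta_{i_k}\lvert f_{i_k}(x_k)\rvert^2$, equivalently
\begin{align}
f_{i_k}(x_k)\,\nabla f_{i_k}(x_k)^T(x_k - x_\star) \ge (1 - \eta_{i_k})\lvert f_{i_k}(x_k)\rvert^2. \notag
\end{align}

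Finally I would feed this lower bound into the cross term of the expansion; because that term carries a minus sign, the lower bound yields an upper bound for the whole right-hand side, namely
\begin{align}
\|x_{k+1}-x_\star\|_2^2 \le \|x_k - x_\star\|_2^2 - \frac{2(1-\eta_{i_k})\lvert f_{i_k}(x_k)\rvert^2}{\|\nabla f_{i_k}(x_k)\|_2^2} + \frac{\lvert f_{i_k}(x_k)\rvert^2}{\|\nabla f_{i_k}(x_k)\|_2^2}, \notag
\end{align}
and collecting the last two terms gives the coefficient $2(1-\eta_{i_k})-1 = 1 - 2\eta_{i_k}$, which is exactly the asserted inequality.

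I do not anticipate a real obstacle here; the one place that requires care is the sign bookkeeping when passing from the absolute-value form of the tangential cone condition to the one-sided bound on $f_{i_k}(x_k)\,\nabla f_{i_k}(x_k)^T(x_k-x_\star)$ — the correct move is to multiply by $f_{i_k}(x_k)$ itself and apply $ab \le \lvert ab\rvert$, rather than splitting into cases on the sign of $f_{i_k}(x_k)$. Note also that the standing hypothesis $\eta = \max_i \eta_i < \tfrac12$ is precisely what makes $1 - 2\eta_{i_k} > 0$, so the estimate represents an actual decrease whenever $f_{i_k}(x_k)\neq 0$, although this positivity is not itself needed to prove the stated bound.
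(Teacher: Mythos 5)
Your proof is correct, and it is the standard derivation of this bound: expand $\|x_{k+1}-x_\star\|_2^2$ from the update rule, apply the local tangential cone condition at $x_1=x_k$, $x_2=x_\star$ with $f_{i_k}(x_\star)=0$ to bound the cross term from below by $(1-\eta_{i_k})\lvert f_{i_k}(x_k)\rvert^2$, and collect terms to get the factor $1-2\eta_{i_k}$. The paper itself states this lemma without proof, importing it from the cited reference, and your argument reproduces that reference's proof; the sign bookkeeping via $ab\le\lvert ab\rvert$ is handled correctly.
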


\begin{lemma}[\cite{zhang2022greedy}]  \label{lemma3}
If the function $f $ satisfies the local tangential cone condition, then for $\forall x_{1}, x_{2} \in \mathbb{R}^{n}$ and an index subset $\tau\subseteq[m]$, we have
\begin{align}
\left\|f_{\tau}\left({x}_{1}\right)-f_{\tau}\left({x}_{2}\right)\right\|^2_2
\geq \frac{1}{1+\eta^2}\left\|f^{\prime}_{\tau}\left(x_{1}\right)\left(x_{1}-x_{2}\right)\right\|^2_2.\notag
\end{align}
\end{lemma}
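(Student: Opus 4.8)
The plan is to pass through the scalar local tangential cone condition of Definition \ref{definition} coordinate by coordinate and then reassemble the coordinates into Euclidean norms. Fix the index set $\tau$. For each $i\in\tau$, invoke the condition of Definition \ref{definition} for the given pair $x_1,x_2$; squaring the scalar inequality and using $\eta_i\le\eta$ gives
\begin{align}
\bigl(f_i(x_1)-f_i(x_2)-\nabla f_i(x_1)^T(x_1-x_2)\bigr)^2\le\eta^2\bigl(f_i(x_1)-f_i(x_2)\bigr)^2 \notag
\end{align}
for every $i\in\tau$. Summing over $i\in\tau$ and recognising the two sides as the squared $2$-norms of the vectors $f_\tau(x_1)-f_\tau(x_2)-f^{\prime}_{\tau}(x_1)(x_1-x_2)$ and $f_\tau(x_1)-f_\tau(x_2)$ (recall that $f^{\prime}_{\tau}(x_1)$ stacks the rows $\nabla f_i(x_1)^T$, $i\in\tau$) yields
\begin{align}
\bigl\|f_\tau(x_1)-f_\tau(x_2)-f^{\prime}_{\tau}(x_1)(x_1-x_2)\bigr\|_2^2\le\eta^2\bigl\|f_\tau(x_1)-f_\tau(x_2)\bigr\|_2^2. \qquad (\star) \notag
\end{align}

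Next, write $w:=f_\tau(x_1)-f_\tau(x_2)-f^{\prime}_{\tau}(x_1)(x_1-x_2)$, so that $f^{\prime}_{\tau}(x_1)(x_1-x_2)=\bigl(f_\tau(x_1)-f_\tau(x_2)\bigr)-w$, and expand:
\begin{align}
\bigl\|f^{\prime}_{\tau}(x_1)(x_1-x_2)\bigr\|_2^2=\bigl\|f_\tau(x_1)-f_\tau(x_2)\bigr\|_2^2-2\bigl\langle f_\tau(x_1)-f_\tau(x_2),\,w\bigr\rangle+\|w\|_2^2. \notag
\end{align}
I would bound $\|w\|_2^2$ using $(\star)$ and show that the cross term $\bigl\langle f_\tau(x_1)-f_\tau(x_2),\,w\bigr\rangle$ is nonnegative; here the coordinatewise information re-enters, because for each $i$ the scalar condition forces $\bigl(f_i(x_1)-f_i(x_2)\bigr)\,\nabla f_i(x_1)^T(x_1-x_2)\ge(1-\eta_i)\bigl(f_i(x_1)-f_i(x_2)\bigr)^2\ge0$ (the same scalar estimate that underlies the one-row bound of Lemma \ref{lemma1}). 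Combining these gives $\bigl\|f^{\prime}_{\tau}(x_1)(x_1-x_2)\bigr\|_2^2\le(1+\eta^2)\bigl\|f_\tau(x_1)-f_\tau(x_2)\bigr\|_2^2$, which is the assertion after dividing by $1+\eta^2$.

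The step I expect to be the main obstacle is exactly the sign control of the cross term $\bigl\langle f_\tau(x_1)-f_\tau(x_2),\,w\bigr\rangle$. The coordinatewise condition pins down each product $\bigl(f_i(x_1)-f_i(x_2)\bigr)\bigl(f_i(x_1)-f_i(x_2)-\nabla f_i(x_1)^T(x_1-x_2)\bigr)$ only to within $\pm\eta_i\bigl(f_i(x_1)-f_i(x_2)\bigr)^2$, so one must check with care that summing over $\tau$ does not spoil the sign needed for the constant $1+\eta^2$; if a direct nonnegativity argument is not available, a fallback is to estimate the cross term by Cauchy--Schwarz together with $(\star)$, which still produces an inequality of the same shape. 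Everything else --- the coordinatewise use of Definition \ref{definition} and the summation in the first step --- is routine.
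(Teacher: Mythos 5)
The paper does not actually prove Lemma \ref{lemma3} --- it is imported from \cite{zhang2022greedy} without proof --- so your argument has to stand on its own, and it does not: the step you yourself flag as the main obstacle, the sign of the cross term, is a genuine gap rather than a technicality. Write $a_i=f_i(x_1)-f_i(x_2)$ and $b_i=a_i-\nabla f_i(x_1)^T(x_1-x_2)$, so that $w=(b_i)_{i\in\tau}$ and the cross term is $\sum_{i\in\tau}a_ib_i$. Definition \ref{definition} controls only the magnitude $\lvert b_i\rvert\le\eta_i\lvert a_i\rvert$ and says nothing about the sign of $b_i$. The scalar inequality you invoke, $a_i\,\nabla f_i(x_1)^T(x_1-x_2)\ge(1-\eta_i)a_i^2$, is true but is equivalent to $a_ib_i\le\eta_i a_i^2$, i.e.\ an \emph{upper} bound on the summands of the cross term; it does not exclude $a_ib_i$ being as negative as $-\eta_i a_i^2$, which is perfectly consistent with the hypothesis (take $b_i=-\eta_i a_i$). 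So nonnegativity of $\langle f_\tau(x_1)-f_\tau(x_2),w\rangle$ cannot be extracted from the tangential cone condition, and the first branch of your argument fails.

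The fallback does not rescue the stated constant either. Cauchy--Schwarz together with $(\star)$ gives $\lvert\langle f_\tau(x_1)-f_\tau(x_2),w\rangle\rvert\le\eta\left\|f_\tau(x_1)-f_\tau(x_2)\right\|_2^2$, and substituting into your expansion yields only
\begin{align}
\left\|f^{\prime}_{\tau}(x_1)(x_1-x_2)\right\|_2^2\le\left(1+2\eta+\eta^2\right)\left\|f_\tau(x_1)-f_\tau(x_2)\right\|_2^2=(1+\eta)^2\left\|f_\tau(x_1)-f_\tau(x_2)\right\|_2^2,\notag
\end{align}
which is strictly weaker than the claimed $1+\eta^2$ whenever $\eta>0$. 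This is not ``an inequality of the same shape'' for the purposes of this paper: the entire point of the constant $1/(1+\eta^2)$ (see Remark \ref{rek:th_DR-CNK_1}) is that it improves on the $1/(1+\eta)^2$ appearing in the NRK/NURK convergence factors. Moreover, a single-coordinate example with $b_i$ close to $-\eta a_i$ shows that $(1+\eta)^2$ is the best constant obtainable from the coordinatewise magnitude bound alone, so no rearrangement of your expansion can reach $1+\eta^2$; proving the lemma as stated requires either information beyond Definition \ref{definition} or the argument of \cite{zhang2022greedy} itself, neither of which your proposal supplies.
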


\begin{lemma}[\cite{zhang2022greedy}]  \label{sec:theory-mutiple-lemma1}
If the function $f $ satisfies the local tangential cone condition and a vector $x_\star\in \mathbb{R}^{n} $ satisfies $f(x_\star)=0$, then from the block iteration formula $x_{k+1}=x_{k}-(f_{\tau_k}^{\prime}(x_k))^{\dagger}f_{\tau_k}(x_k)$ with $\tau_k\subseteq[m]$, we have
\begin{align}
\left\|x_{k+1}-x_{\star}\right\|^{2}_2
\leq
\left\|x_{k}-x_{\star}\right\|^{2}_2-  \left(h_2^2((f_{\tau_k}^{\prime}(x_k))^{\dagger})-2\eta\sigma^2_{\max}\left((f_{\tau_k}^{\prime}(x_k))^{\dagger}\right)\right)\left\|  f_{\tau_k}(x_k)\right\|^{2}_2. \notag
\end{align}
\end{lemma}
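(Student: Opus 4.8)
The plan is to expand the squared distance $\|x_{k+1}-x_\star\|_2^2$ and reduce everything to the residual $f_{\tau_k}(x_k)$ by exploiting the algebra of the Moore--Penrose pseudoinverse together with the componentwise local tangential cone condition of Definition~\ref{definition}. Throughout set $J:=f_{\tau_k}^{\prime}(x_k)$ and $r:=f_{\tau_k}(x_k)$, so the block update reads $x_{k+1}-x_\star=(x_k-x_\star)-J^{\dagger}r$ and
\begin{align}
\|x_{k+1}-x_\star\|_2^2=\|x_k-x_\star\|_2^2-2(x_k-x_\star)^{T}J^{\dagger}r+\|J^{\dagger}r\|_2^2.\notag
\end{align}
The key step is to rewrite the cross term. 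Since $J^{\dagger}r$ lies in the row space of $J$, which is the orthogonal complement of $\ker(J)$, replacing $x_k-x_\star$ by its orthogonal projection $J^{\dagger}J(x_k-x_\star)$ onto that subspace leaves the cross term unchanged; writing $J(x_k-x_\star)=r-v$ with $v:=r-J(x_k-x_\star)$ and using $(J^{\dagger})^{T}J^{\dagger}=(JJ^{T})^{\dagger}$ gives
\begin{align}
(x_k-x_\star)^{T}J^{\dagger}r=\bigl(J^{\dagger}(r-v)\bigr)^{T}J^{\dagger}r=\|J^{\dagger}r\|_2^2-v^{T}(JJ^{T})^{\dagger}r.\notag
\end{align}
Setting $s:=(JJ^{T})^{\dagger}r$ and substituting back yields the \emph{exact} identity $\|x_{k+1}-x_\star\|_2^2=\|x_k-x_\star\|_2^2-\|J^{\dagger}r\|_2^2+2v^{T}s$.

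It then remains to estimate the last two terms. By the definition of $h_2$ one has $\|J^{\dagger}r\|_2\ge h_2(J^{\dagger})\|r\|_2$, hence $\|J^{\dagger}r\|_2^2\ge h_2^2\bigl((f_{\tau_k}^{\prime}(x_k))^{\dagger}\bigr)\|f_{\tau_k}(x_k)\|_2^2$. For the cross term, Cauchy--Schwarz gives $v^{T}s\le\|v\|_2\|s\|_2$. Because $f(x_\star)=0$, applying Definition~\ref{definition} with $x_1=x_k$, $x_2=x_\star$ componentwise over $i\in\tau_k$ yields $|v_i|=\lvert f_i(x_k)-\nabla f_i(x_k)^{T}(x_k-x_\star)\rvert\le\eta_i\lvert f_i(x_k)\rvert\le\eta\lvert f_i(x_k)\rvert$, so squaring and summing over $\tau_k$ gives $\|v\|_2\le\eta\|r\|_2$; moreover $\|s\|_2\le\|(JJ^{T})^{\dagger}\|_2\|r\|_2=\sigma_{\max}^2(J^{\dagger})\|r\|_2$, since the nonzero singular values of $(JJ^{T})^{\dagger}$ are the squares of those of $J^{\dagger}$. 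Combining, $2v^{T}s\le 2\eta\,\sigma_{\max}^2\bigl((f_{\tau_k}^{\prime}(x_k))^{\dagger}\bigr)\|f_{\tau_k}(x_k)\|_2^2$, and plugging the two bounds into the identity above produces precisely the claimed inequality.

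The expansion of the square and the Cauchy--Schwarz estimate are routine; the step that needs care is the pseudoinverse bookkeeping in the second display --- the identity $(J^{\dagger})^{T}J^{\dagger}=(JJ^{T})^{\dagger}$, the fact that $J^{\dagger}$ annihilates the component of $r$ outside the column space of $J$ (so that the cross term really is $\|J^{\dagger}r\|_2^2-v^{T}s$), and the singular-value relation $\|(JJ^{T})^{\dagger}\|_2=\sigma_{\max}^2(J^{\dagger})$ --- all of which become transparent once one passes to a thin singular value decomposition $J=U\Sigma V^{T}$, so in the write-up I would either carry that decomposition through this step or quote the standard pseudoinverse identities. As a sanity check, when $\tau_k$ is a single index the bound collapses to $\|x_{k+1}-x_\star\|_2^2\le\|x_k-x_\star\|_2^2-(1-2\eta)\lvert f_{i_k}(x_k)\rvert^2/\|\nabla f_{i_k}(x_k)\|_2^2$, recovering Lemma~\ref{lemma1}; equivalently, one may organize the same computation around the orthogonal splitting $x_{k+1}-x_\star=(I-J^{\dagger}J)(x_k-x_\star)-J^{\dagger}v$, which makes the Pythagorean nature of the identity explicit.
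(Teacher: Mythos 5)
Your argument is correct: the exact expansion $\|x_{k+1}-x_\star\|_2^2=\|x_k-x_\star\|_2^2-\|J^{\dagger}r\|_2^2+2v^{T}s$ is right (the projection step $\langle x_k-x_\star,J^{\dagger}r\rangle=\langle J^{\dagger}J(x_k-x_\star),J^{\dagger}r\rangle$ and the identity $(J^{\dagger})^{T}J^{\dagger}=(JJ^{T})^{\dagger}$ both check out via the thin SVD), the componentwise tangential-cone bound $\|v\|_2\le\eta\|r\|_2$ uses $f(x_\star)=0$ correctly, and the two singular-value estimates combine to give exactly the stated inequality; the single-row sanity check against Lemma~\ref{lemma1} is also reassuring. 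Note that this paper does not prove the lemma at all --- it is imported from \cite{zhang2022greedy} --- so there is no in-paper proof to compare against, but your derivation is the standard one for such block Kaczmarz bounds and would serve as a complete self-contained justification.
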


\section{Single sample-based capped methods}
\label{sec: Single-methods}

Intuitively, at the $k$th iteration, to make $f(x)\rightarrow 0$ as fast as possible, we might want the larger entries in the vector $f(x)$ to be preferentially annihilated as much as possible. In this way, the corresponding nonlinear Kaczmarz method should converge quickly. With this in mind, we construct the DR-CNK method listed in Algorithm \ref{The DR-CNK method}. Specifically, it first determines the index subset $\mathcal{U}_k$ in (\ref{AL_DR-CNK_Uk}) by using the combination of the maximum and average distances, and then samples an index from the subset $\mathcal{U}_k$ with probability that is proportional to the corresponding residual. This means that the final selected iteration index $i_k$ in the DR-CNK method is jointly determined by two greedy rules, namely, the distance rule for determining the index set $\mathcal{U}_k$ and the residual rule for extracting the index $i_k$ from the set $\mathcal{U}_k$. 
Conversely, we can also use the maximum residual rule to construct an index subset $\mathcal{I}_k$, and then select an iteration index $i_k$ from the set with the probability criterion determined by the distance, thus obtaining the RD-CNK method shown in Algorithm \ref{The RD-CNK method}.

\begin{algorithm}
\caption{The DR-CNK method}
\label{The DR-CNK method}
\begin{algorithmic}[1]
\Require The initial estimate $x_0\in \mathbb{R}^n$.
\For{$k=0, 1, 2, \cdots $ until convergence,}
        \State Compute
\begin{align}
\varepsilon_k=\frac{1}{2}\left(\frac{1}{\|f(x_k)\|_2^2}\max\limits_{i\in[m]}\frac{\left\lvert f_{i}(x_k)\right\rvert^{2}}{\left\|\nabla f_{i}\left(x_{k}\right)\right\|_2^{2}}+\frac{1}{\|f^{\prime}(x_k)\|_F^2}\right). \label{AL_DR-CNK_vareps}
\end{align}
        \State Determine the index subset
\begin{align}
\mathcal{U}_k=\left\{i\vert \left\lvert f_{i}(x_k)\right\rvert^{2}\geq\varepsilon_k\|f(x_k)\|_2^2\left\|\nabla f_{i}\left(x_{k}\right)\right\|_2^{2}\right\} . \label{AL_DR-CNK_Uk}
\end{align}
        \State Compute the $i$th entry $\tilde{r}_k^{(i)}$ of the vector $\tilde{r}_k$ according to
  $$
\tilde{r}_{k}^{(i)}=\left\{\begin{array}{ll}{f_i(x_k),} & {\text { if } i \in \mathcal{U}_{k}}, \\ {0,} & {\text { otherwise. }}\end{array}\right.
$$
        \State Select $i_{k} \in \mathcal{U}_{k}$ with probability $\operatorname{Pr}\left(\mathrm{row}=i_{k}\right)=\frac{\lvert\tilde{r}_{k}^{\left(i_{k}\right)}\rvert^{2}}{\left\|\tilde{r}_{k}\right\|_{2}^{2}}$.
        \State Update $x_{k+1}=x_{k}-\frac{f_{i_k}(x_k)}{\|\nabla f_{i_k}(x_k)\|^2_2}\nabla f_{i_k}(x_k)$.
\EndFor
\end{algorithmic}
\end{algorithm}
\begin{algorithm}
\caption{The RD-CNK method}
\label{The RD-CNK method}
\begin{algorithmic}[1]
\Require The initial estimate $x_0\in \mathbb{R}^n$.
\For{$k=0, 1, 2, \cdots $ until convergence,}
\State{Compute
\begin{align}
\delta_k=\frac{1}{2}\left(\frac{\max\limits_{i\in[m]}\left\lvert f_{i}(x_k)\right\rvert^{2}}{\|f(x_k)\|_2^2} +\frac{1}{m}\right) . \label{AL_RD-CNK_delta}
\end{align}
}
\State{Determine the index subset
\begin{align}
\mathcal{I}_k=\left\{i\vert \left\lvert f_{i}(x_k)\right\rvert^{2}\geq\delta_k\|f(x_k)\|_2^2 \right\} . \label{AL_RD-CNK_Ik}
\end{align}
}
\State{Compute the $i$th entry $\tilde{d}_k^{(i)}$ of the vector $\tilde{d}_k$ according to
  $$
\tilde{d}_{k}^{(i)}=\left\{\begin{array}{ll}{\frac{ f_{i}(x_k) }{\left\|\nabla f_{i}\left(x_{k}\right)\right\|_2},} & {\text { if } i \in \mathcal{I}_{k}}, \\ {0,} & {\text { otherwise. }}\end{array}\right.
$$}
\State{Select $i_{k} \in \mathcal{I}_{k}$ with probability $\operatorname{Pr}\left(\mathrm{row}=i_{k}\right)=\frac{\lvert\tilde{d}_{k}^{\left(i_{k}\right)}\rvert^{2}}{\left\|\tilde{d}_{k}\right\|_{2}^{2}}$.}
\State{Update $x_{k+1}=x_{k}-\frac{f_{i_k}(x_k)}{\|\nabla f_{i_k}(x_k)\|^2_2}\nabla f_{i_k}(x_k)$. }
\EndFor
\end{algorithmic}
\end{algorithm}

\begin{remark}
\label{Al_rmk1}
The DR-CNK method is well defined as the index set $\mathcal{U}_k$ in (\ref{AL_DR-CNK_Uk}) is always nonempty.
This is because
$$
 \max\limits_{i\in[m]}\frac{\left\lvert f_{i}(x_k)\right\rvert^{2}}{\left\|\nabla f_{i}\left(x_{k}\right)\right\|_2^{2}}\geq \sum \limits _{i=1}^{m}\frac{ \left\|\nabla f_{i}\left(x_{k}\right)\right\|_2^{2}}{\|f^{\prime}(x_k)\|_F^2}\frac{\left\lvert f_{i}(x_k)\right\rvert^{2}}{\left\|\nabla f_{i}\left(x_{k}\right)\right\|_2^{2}}=\frac{\|f(x_k)\|_2^2}{\|f^{\prime}(x_k)\|_F^2}$$
and then
$$
\frac{\left\lvert f_{i_k}(x_k)\right\rvert^{2}}{\left\|\nabla f_{i_k}\left(x_{k}\right)\right\|_2^{2}} = \max\limits_{i\in[m]}\frac{\left\lvert f_{i}(x_k)\right\rvert^{2}}{\left\|\nabla f_{i}\left(x_{k}\right)\right\|_2^{2}}  \geq\frac{1}{2}\left( \max\limits_{i\in[m]}\frac{\left\lvert f_{i}(x_k)\right\rvert^{2}}{\left\|\nabla f_{i}\left(x_{k}\right)\right\|_2^{2}}+\frac{\|f(x_k)\|_2^2}{\|f^{\prime}(x_k)\|_F^2}\right)
$$
imply $i_k\in\mathcal{U}_{k}.$

Similarly, we can show that the index set $\mathcal{I}_{k}$ in Algorithm \ref{The RD-CNK method} is also nonempty, that is, the RD-CNK method is also well defined.

In addition, if $f(x)=Ax-b$, the DR-CNK and RD-CNK methods recover the greedy randomized Kaczmarz method \cite{Bai2018} and the greedy randomized Motzkin-Kaczmarz method \cite{zhang2020greedy}, respectively.
\end{remark}

\begin{remark}
\label{Al_rmk2}
As in \cite{Bai2018r,gower2021adaptive}, a relaxation parameter $\theta \in [0, 1]$ can be introduced into the quantities $\varepsilon_{k}$ in (\ref{AL_DR-CNK_vareps}) and $\delta_{k}$ in (\ref{AL_RD-CNK_delta}), thus obtaining
\begin{align}
\varepsilon_k=  \theta\frac{1}{\|f(x_k)\|_2^2}\max\limits_{i\in[m]}\frac{\left\lvert f_{i}(x_k)\right\rvert^{2}}{\left\|\nabla f_{i}\left(x_{k}\right)\right\|_2^{2}}+(1-\theta)\frac{1}{\|f^{\prime}(x_k)\|_F^2}  \notag
\end{align}
and
\begin{align}
\delta_k=\theta\frac{\max\limits_{i\in[m]}\left\lvert f_{i}(x_k)\right\rvert^{2}}{\|f(x_k)\|_2^2} +(1-\theta)\frac{1}{m}. \notag
\end{align}
Then, the corresponding relaxed greedy capped nonlinear Kaczmarz methods can be obtained.
\end{remark}

\section{Convergence analysis}
\label{sec:theory_Single-methods}

Below, we present the convergence guarantees for the DR-CNK and RD-CNK methods.

\begin{theorem}
\label{th:DR-CNK}
If the nonlinear function $f $ satisfies the local tangential cone condition given in Definition \ref{definition}, $\eta=\max\limits _{i\in[m]}
\eta_{i}<\frac{1}{2}$, and $f(x_{\star})=0$, then the iterations of the DR-CNK method in Algorithm \ref{The DR-CNK method} satisfy
\begin{align}
\mathbb{E}\left[\left\| {x}_{k+1}- {x}_{\star}\right\|^{2}_2\right] \leq\left(1 - \frac{ 1-2 \eta }{1+\eta^2} \varepsilon_k h_2^2\left( f^{\prime}\left(x_{k}\right)\right )     \right)
 \mathbb{E}\left[\left\|x_{k}-{x}_{\star}\right\|^{2}_2\right].\label{th:DR-CNK_0}
 \end{align}
\end{theorem}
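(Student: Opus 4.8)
The plan is to start from the per-step decrease furnished by Lemma~\ref{lemma1}, which applies to the update formula~(\ref{Sec1.4}) used in Algorithm~\ref{The DR-CNK method}. Since the index $i_k$ is drawn from $\mathcal{U}_k$ with probability $\lvert\tilde r_k^{(i_k)}\rvert^2/\|\tilde r_k\|_2^2$, I would take the conditional expectation $\mathbb{E}^k$ over this choice. Using $\eta_{i_k}\le\eta$, Lemma~\ref{lemma1} gives
\begin{align}
\mathbb{E}^k\!\left[\|x_{k+1}-x_\star\|_2^2\right]
\le \|x_k-x_\star\|_2^2
-(1-2\eta)\sum_{i\in\mathcal{U}_k}\frac{\lvert f_i(x_k)\rvert^2}{\|\tilde r_k\|_2^2}\cdot\frac{\lvert f_i(x_k)\rvert^2}{\|\nabla f_i(x_k)\|_2^2}.\notag
\end{align}
The key observation is that $\|\tilde r_k\|_2^2=\sum_{i\in\mathcal{U}_k}\lvert f_i(x_k)\rvert^2\le\|f(x_k)\|_2^2$, so the sum is bounded below by $\frac{1}{\|f(x_k)\|_2^2}\sum_{i\in\mathcal{U}_k}\frac{\lvert f_i(x_k)\rvert^4}{\|\nabla f_i(x_k)\|_2^2}$.

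Next I would exploit the defining inequality of $\mathcal{U}_k$ in~(\ref{AL_DR-CNK_Uk}): for $i\in\mathcal{U}_k$ we have $\lvert f_i(x_k)\rvert^2\ge\varepsilon_k\|f(x_k)\|_2^2\|\nabla f_i(x_k)\|_2^2$, hence
\begin{align}
\frac{\lvert f_i(x_k)\rvert^4}{\|\nabla f_i(x_k)\|_2^2}
\ge \varepsilon_k\|f(x_k)\|_2^2\,\lvert f_i(x_k)\rvert^2.\notag
\end{align}
Summing over $i\in\mathcal{U}_k$ and dividing by $\|f(x_k)\|_2^2$ leaves $\varepsilon_k\sum_{i\in\mathcal{U}_k}\lvert f_i(x_k)\rvert^2=\varepsilon_k\|\tilde r_k\|_2^2$. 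Here I would need the elementary but important point that the residual component hitting $x_\star$ is captured: since $f(x_\star)=0$, applying Lemma~\ref{lemma3} with $\tau=\mathcal{U}_k$, $x_1=x_k$, $x_2=x_\star$ gives $\|\tilde r_k\|_2^2=\|f_{\mathcal{U}_k}(x_k)\|_2^2\ge\frac{1}{1+\eta^2}\|f'_{\mathcal{U}_k}(x_k)(x_k-x_\star)\|_2^2$. The remaining task is to pass from the restricted Jacobian $f'_{\mathcal{U}_k}(x_k)$ to the full $f'(x_k)$; since $\mathcal{U}_k$ always contains the index achieving the maximum (Remark~\ref{Al_rmk1}), one has $\|f'_{\mathcal{U}_k}(x_k)(x_k-x_\star)\|_2\ge$ that single component's contribution, but to get the clean bound with $h_2^2(f'(x_k))$ I would instead argue directly: combine the two previous displays to obtain
\begin{align}
\mathbb{E}^k\!\left[\|x_{k+1}-x_\star\|_2^2\right]
\le \|x_k-x_\star\|_2^2-\frac{(1-2\eta)\varepsilon_k}{1+\eta^2}\,\|f'(x_k)(x_k-x_\star)\|_2^2,\notag
\end{align}
using that $\mathcal{U}_k\subseteq[m]$ and $\|f(x_k)\|_2^2\ge\|f_{\mathcal{U}_k}(x_k)\|_2^2$ together with Lemma~\ref{lemma3} applied once to $\tau=[m]$ after first showing $\sum_{i\in\mathcal{U}_k}\lvert f_i(x_k)\rvert^2\ge\varepsilon_k\|f(x_k)\|_2^2\cdot(\text{something}\ge 1)$—this is the delicate accounting step.

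Finally I would apply the definition $h_2^2(f'(x_k))=\inf_{x\neq0}\|f'(x_k)x\|_2^2/\|x\|_2^2$ to get $\|f'(x_k)(x_k-x_\star)\|_2^2\ge h_2^2(f'(x_k))\|x_k-x_\star\|_2^2$, yielding
\begin{align}
\mathbb{E}^k\!\left[\|x_{k+1}-x_\star\|_2^2\right]
\le\left(1-\frac{1-2\eta}{1+\eta^2}\,\varepsilon_k\,h_2^2(f'(x_k))\right)\|x_k-x_\star\|_2^2,\notag
\end{align}
and then take full expectation and use the tower property. I expect the main obstacle to be the bookkeeping in the middle step: carefully chaining the inequality $\lvert f_i\rvert^2\ge\varepsilon_k\|f\|_2^2\|\nabla f_i\|_2^2$ on $\mathcal{U}_k$ with the probability weights so that the $\|\tilde r_k\|_2^2$ in the denominator cancels correctly, and then justifying the replacement of $f'_{\mathcal{U}_k}$ by the full $f'$ in the quadratic form (equivalently, recognizing that $\|f_{\mathcal{U}_k}(x_k)\|_2^2\ge\varepsilon_k\|f(x_k)\|_2^2\|f'_{\mathcal{U}_k}(x_k)\|$-type estimate must instead be routed through Lemma~\ref{lemma3} on all of $[m]$, which works because the discarded rows only help). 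The tangential cone constant $\eta<\tfrac12$ is exactly what keeps $1-2\eta>0$, so the contraction factor is genuinely less than one, and comparing $\varepsilon_k\ge\frac{1}{\|f'(x_k)\|_F^2}$ against the NRK factor $1/\|f'(x_k)\|_F^2$-type quantity will give the promised strict improvement claimed in the introduction.
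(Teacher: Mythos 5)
Your overall architecture (Lemma~\ref{lemma1}, conditional expectation over the sampling rule, the membership condition of $\mathcal{U}_k$, Lemma~\ref{lemma3} with $\tau=[m]$, then $h_2$ and the tower rule) matches the paper, but the middle step --- which you yourself flag as ``the delicate accounting step'' --- is where your route breaks. By first replacing $\|\tilde r_k\|_2^2$ with $\|f(x_k)\|_2^2$ in the denominator and only then invoking $\lvert f_i(x_k)\rvert^2\geq\varepsilon_k\|f(x_k)\|_2^2\|\nabla f_i(x_k)\|_2^2$, you arrive at the lower bound $\varepsilon_k\|\tilde r_k\|_2^2=\varepsilon_k\|f_{\mathcal{U}_k}(x_k)\|_2^2$ for the expected per-step gain. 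That is strictly weaker than what the theorem needs, namely $\varepsilon_k\|f(x_k)\|_2^2$, and the repair you gesture at cannot work: $\|f(x_k)\|_2^2\geq\|f_{\mathcal{U}_k}(x_k)\|_2^2$ points in the wrong direction, applying Lemma~\ref{lemma3} with $\tau=\mathcal{U}_k$ only produces $h_2^2(f'_{\mathcal{U}_k}(x_k))$, which is zero whenever $\lvert\mathcal{U}_k\rvert<n$, and the auxiliary claim $\sum_{i\in\mathcal{U}_k}\lvert f_i(x_k)\rvert^2\geq\varepsilon_k\|f(x_k)\|_2^2\cdot(\text{something}\geq1)$ would require $\|f'_{\mathcal{U}_k}(x_k)\|_F^2\geq1$, which is not guaranteed.

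The fix is to skip your ``key observation'' entirely. In the conditional expectation
\begin{align}
\sum_{i\in\mathcal{U}_k}\frac{\lvert\tilde r_k^{(i)}\rvert^2}{\|\tilde r_k\|_2^2}\cdot\frac{\lvert f_i(x_k)\rvert^2}{\|\nabla f_i(x_k)\|_2^2},\notag
\end{align}
bound the \emph{second} factor from below by $\varepsilon_k\|f(x_k)\|_2^2$ directly from the definition of $\mathcal{U}_k$ in~(\ref{AL_DR-CNK_Uk}); the remaining weights $\lvert\tilde r_k^{(i)}\rvert^2/\|\tilde r_k\|_2^2$ sum to one, so the full $\varepsilon_k\|f(x_k)\|_2^2=\varepsilon_k\|f(x_k)-f(x_\star)\|_2^2$ survives with no loss. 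From there Lemma~\ref{lemma3} on all of $[m]$ and the definition of $h_2$ give exactly the claimed contraction, as in the rest of your sketch. As written, though, your proposal does not establish the stated rate.
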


\begin{proof}
From Lemma \ref{lemma1}, we have
\begin{align}
\left\|x_{k+1}-x_{\star}\right\|^{2}_2
&\leq \left\|x_{k}-x_{\star}\right\|^{2}_2-\left(1-2 \eta_{i_k}\right)
\frac{\left\lvert f_{i_k}(x_k)\right\rvert^{2}}{\left\|\nabla f_{i_k}\left(x_{k}\right)\right\|_2^{2}}. \notag
\end{align}
Taking expectation of both sides conditioned on $x_{k}$ gives
\begin{align}
\mathbb{E}^{k}\left[\left\| {x}_{k+1}- {x}_{\star}\right\|^{2}_2\right]
&\leq
\left\|x_{k}-x_{\star}\right\|^{2}_2 -\mathbb{E}^{k}\left[\left(1-2 \eta_{i_k}\right)
\frac{\left\lvert f_{i_k}(x_k)\right\rvert^{2}}{\left\|\nabla f_{i_k}\left(x_{k}\right)\right\|_2^{2}}\right] \notag
\\
&\leq
\left\|x_{k}-x_{\star}\right\|^{2}_2 -\left(1-2 \eta \right)\mathbb{E}^{k}\left[
\frac{\left\lvert f_{i_k}(x_k)\right\rvert^{2}}{\left\|\nabla f_{i_k}\left(x_{k}\right)\right\|_2^{2}}\right] \notag
\\
&=
\left\|x_{k}-x_{\star}\right\|^{2}_2 -\left(1-2 \eta \right)   \sum\limits_{i_k\in\mathcal{U}_k} \frac{\lvert\tilde{r}_{k}^{\left(i_{k}\right)}\rvert^{2}}{\left\|\tilde{r}_{k}\right\|_{2}^{2}}
\frac{\left\lvert f_{i_k}(x_k)\right\rvert^{2}}{\left\|\nabla f_{i_k}\left(x_{k}\right)\right\|_2^{2}}, \notag
\end{align}
which together with the definitions of $\varepsilon_k$ in (\ref{AL_DR-CNK_vareps}) and the index subset $\mathcal{U}_k$ in (\ref{AL_DR-CNK_Uk}) leads to
\begin{align}
\mathbb{E}^{k}\left[\left\| {x}_{k+1}- {x}_{\star}\right\|^{2}_2\right]
&\leq
\left\|x_{k}-x_{\star}\right\|^{2}_2 -\left(1-2 \eta \right)   \sum\limits_{i_k\in\mathcal{U}_k} \frac{\lvert\tilde{r}_{k}^{\left(i_{k}\right)}\rvert^{2}}{\left\|\tilde{r}_{k}\right\|_{2}^{2}}
\varepsilon_k\|f(x_k)\|_2^2 \notag
\\
&=
\left\|x_{k}-x_{\star}\right\|^{2}_2 -\left(1-2 \eta \right)
\varepsilon_k\|f(x_k)-f(x_{\star})\|_2^2. \notag
\end{align}
Further, considering Lemma \ref{lemma3}, we obtain
\begin{align}
\mathbb{E}^{k}\left[\left\| {x}_{k+1}- {x}_{\star}\right\|^{2}_2\right]
&\leq
\left\|x_{k}-x_{\star}\right\|^{2}_2 -\left(1-2 \eta \right)
\varepsilon_k\frac{1}{1+\eta^2}\left\|f^{\prime} \left(x_{k}\right)\left(x_{k}-x_{\star}\right)\right\|^2_2 \notag
\\
&\leq
\left\|x_{k}-x_{\star}\right\|^{2}_2 -\frac{ 1-2 \eta }{1+\eta^2}\varepsilon_k h_2^2(f^{\prime} \left(x_{k}\right))\left\| x_{k}-x_{\star} \right\|^2_2  \notag
\\
&=
\left(1 -\frac{ 1-2 \eta }{1+\eta^2}\varepsilon_k h_2^2(f^{\prime} \left(x_{k}\right))\right)\left\| x_{k}-x_{\star} \right\|^2_2.  \notag
\end{align}
So, the desired result (\ref{th:DR-CNK_0}) can be deduced by taking expectation on both sides and using the tower rule of expectation.
\end{proof}

\begin{remark}\label{rek:th_DR-CNK_1}
According to $\varepsilon_k $ in (\ref{AL_DR-CNK_vareps}), we have
 \begin{align}
\varepsilon_k \|f^{\prime} \left(x_{k}\right)\|_F^2
&=\frac{1}{2}\left(\frac{\|f^{\prime}(x_k)\|_F^2}{\|f(x_k)\|_2^2}\max\limits_{i\in[m]}\frac{\left\lvert f_{i}(x_k)\right\rvert^{2}}{\left\|\nabla f_{i}\left(x_{k}\right)\right\|_2^{2}}+1\right)\notag
\\
&=
\frac{1}{2}\left(\max\limits_{i\in[m]}\frac{\left\lvert f_{i}(x_k)\right\rvert^{2}}{\left\|\nabla f_{i}\left(x_{k}\right)\right\|_2^{2}}/\frac{\|f(x_k)\|_2^2}{\|f^{\prime}(x_k)\|_F^2}+1\right) \notag
\\
&=
\frac{1}{2}\left(\max\limits_{i\in[m]}\frac{\left\lvert f_{i}(x_k)\right\rvert^{2}}{\left\|\nabla f_{i}\left(x_{k}\right)\right\|_2^{2}}/\sum\limits_{i \in[m]} \frac{\left\|\nabla f_{i}\left(x_{k}\right)\right\|_2^{2}}{\|f^{\prime}(x_k)\|_F^2}\frac{\lvert f_i(x_k)\rvert^2}{\left\|\nabla f_{i}\left(x_{k}\right)\right\|_2^{2}}+1\right) \notag
\\
&\geq 1.\notag
 \end{align}
That is, $$\varepsilon_k\geq\frac{1}{ \|f^{\prime} \left(x_{k}\right)\|_F^2}.$$
Then, it holds that
\begin{align}
 \frac{ 1-2 \eta }{1+\eta^2} \varepsilon_k h_2^2\left( f^{\prime}\left(x_{k}\right)\right )>\frac{1-2 \eta}{(1+\eta)^{2}   }
 \frac{h_2^2(f^{\prime}(x_{k}))}{ \|f^{\prime}(x_{k}) \|_{F}^{2}m   },\notag
 \end{align}
which implies
\begin{align}
\rho_{\text{DR-CNK}}=1-\frac{ 1-2 \eta }{1+\eta^2} \varepsilon_k h_2^2\left( f^{\prime}\left(x_{k}\right)\right )<1-\frac{1-2 \eta}{(1+\eta)^{2}   }
 \frac{h_2^2(f^{\prime}(x_{k}))}{ \|f^{\prime}(x_{k}) \|_{F}^{2}m   }=\rho_{\text{NRK}}=\rho_{\text{NURK}}.\notag
 \end{align}
Here $\rho_{\text{NRK}} $ and $ \rho_{\text{NURK}}$ are respectively the convergence factors of the NRK and NURK methods provided in \cite{wang2022nonlinear}.
So, we can conclude that the convergence factor of the DR-CNK method is strictly smaller than that of the NRK and NURK methods.
\end{remark}

\begin{theorem}
\label{th:RD-CNK}
If the nonlinear function $f $ satisfies the local tangential cone condition given in Definition \ref{definition}, $\eta=\max\limits _{i\in[m]}
\eta_{i}<\frac{1}{2}$, and $f(x_{\star})=0$, then the iterations of the RD-CNK method in Algorithm \ref{The RD-CNK method} satisfy
\begin{align}
\mathbb{E}\left[\left\| {x}_{k+1}- {x}_{\star}\right\|^{2}_2\right] \leq\left(1 - \frac{ 1-2 \eta }{1+\eta^2} \frac{ \delta_k  h_2^2\left( f^{\prime}\left(x_{k}\right)\right )  }{\max\limits_{i\in[m]}\left\|\nabla f_{i}\left(x_{k}\right)\right\|_2^{2}}   \right)
 \mathbb{E}\left[\left\|x_{k}-{x}_{\star}\right\|^{2}_2\right].\label{th:RD-CNK_0}
 \end{align}
\end{theorem}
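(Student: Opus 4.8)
The plan is to follow the template of the proof of Theorem~\ref{th:DR-CNK} almost verbatim, with one extra bookkeeping step to handle the gradient norms. First I would apply Lemma~\ref{lemma1} to the update of Algorithm~\ref{The RD-CNK method} and take the conditional expectation $\mathbb{E}^{k}[\cdot]$; using $\eta_{i_k}\le\eta$ together with the sampling rule $\operatorname{Pr}(\mathrm{row}=i_k)=|\tilde d_k^{(i_k)}|^{2}/\|\tilde d_k\|_2^{2}$ and the fact that $\tilde d_k$ is supported on $\mathcal{I}_k$, this gives
\begin{align}
\mathbb{E}^{k}\!\left[\|x_{k+1}-x_{\star}\|_2^{2}\right]\le\|x_{k}-x_{\star}\|_2^{2}-(1-2\eta)\sum_{i_k\in\mathcal{I}_k}\frac{|\tilde d_k^{(i_k)}|^{2}}{\|\tilde d_k\|_2^{2}}\,\frac{|f_{i_k}(x_k)|^{2}}{\|\nabla f_{i_k}(x_k)\|_2^{2}}.\notag
\end{align}

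The key step is to bound each summand from below. For $i_k\in\mathcal{I}_k$ one has $\|\nabla f_{i_k}(x_k)\|_2^{2}\le\max_{i\in[m]}\|\nabla f_i(x_k)\|_2^{2}$ and, by the definition~(\ref{AL_RD-CNK_Ik}) of $\mathcal{I}_k$, $|f_{i_k}(x_k)|^{2}\ge\delta_k\|f(x_k)\|_2^{2}$, whence
\begin{align}
\frac{|f_{i_k}(x_k)|^{2}}{\|\nabla f_{i_k}(x_k)\|_2^{2}}\ge\frac{\delta_k\|f(x_k)\|_2^{2}}{\max_{i\in[m]}\|\nabla f_i(x_k)\|_2^{2}}.\notag
\end{align}
Since $\mathcal{I}_k$ is nonempty (shown exactly as in Remark~\ref{Al_rmk1}) and $f(x_k)\ne 0$, the weights $|\tilde d_k^{(i_k)}|^{2}/\|\tilde d_k\|_2^{2}$ are well defined and sum to one over $i_k\in\mathcal{I}_k$; pulling the above bound out of the sum yields
\begin{align}
\mathbb{E}^{k}\!\left[\|x_{k+1}-x_{\star}\|_2^{2}\right]\le\|x_{k}-x_{\star}\|_2^{2}-(1-2\eta)\,\frac{\delta_k\|f(x_k)\|_2^{2}}{\max_{i\in[m]}\|\nabla f_i(x_k)\|_2^{2}}.\notag
\end{align}

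Finally I would convert $\|f(x_k)\|_2^{2}$ into $\|x_k-x_\star\|_2^{2}$ as in Theorem~\ref{th:DR-CNK}: writing $\|f(x_k)\|_2^{2}=\|f(x_k)-f(x_\star)\|_2^{2}$, applying Lemma~\ref{lemma3} with $\tau=[m]$, and then using the definition of $h_2$ gives $\|f(x_k)\|_2^{2}\ge\frac{1}{1+\eta^2}h_2^2(f'(x_k))\|x_k-x_\star\|_2^{2}$. Substituting this in, then taking the full expectation and invoking the tower rule, produces~(\ref{th:RD-CNK_0}).

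The only point that needs a little care — the analogue of the ``hard part'' here — is that lower bound on $|f_{i_k}(x_k)|^{2}/\|\nabla f_{i_k}(x_k)\|_2^{2}$: unlike the DR-CNK method, where the gradient norm is already absorbed into the definition of the capped set $\mathcal{U}_k$ (so that the factor $\varepsilon_k\|f(x_k)\|_2^2$ comes out cleanly), the set $\mathcal{I}_k$ in RD-CNK controls only the residual component $|f_{i_k}(x_k)|^{2}$. One is therefore forced to use the crude estimate $\|\nabla f_{i_k}(x_k)\|_2^{2}\le\max_{i\in[m]}\|\nabla f_i(x_k)\|_2^{2}$, and this is precisely what introduces the extra factor $\max_{i\in[m]}\|\nabla f_i(x_k)\|_2^{2}$ in the denominator of the convergence factor appearing in~(\ref{th:RD-CNK_0}).
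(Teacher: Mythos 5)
Your proposal is correct and follows essentially the same route as the paper's proof: Lemma~\ref{lemma1} plus the conditional expectation over the sampling probabilities on $\mathcal{I}_k$, the bound $\|\nabla f_{i_k}(x_k)\|_2^{2}\le\max_{i\in[m]}\|\nabla f_i(x_k)\|_2^{2}$ combined with the defining inequality of $\mathcal{I}_k$ to extract $\delta_k\|f(x_k)\|_2^2$, and finally Lemma~\ref{lemma3} with the definition of $h_2$. The only (immaterial) difference is that the paper pulls the gradient-norm factor out of the expectation before bounding the residual sum, whereas you bound each summand jointly.
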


\begin{proof}
Following an analogous proof process to the DR-CNK method, we can get the inequality
\begin{align}
\mathbb{E}^{k}\left[\left\| {x}_{k+1}- {x}_{\star}\right\|^{2}_2\right]
&\leq
\left\|x_{k}-x_{\star}\right\|^{2}_2 -\left(1-2 \eta \right)\mathbb{E}^{k}\left[
\frac{\left\lvert f_{i_k}(x_k)\right\rvert^{2}}{\left\|\nabla f_{i_k}\left(x_{k}\right)\right\|_2^{2}}\right] \notag
\\
&\leq
\left\|x_{k}-x_{\star}\right\|^{2}_2 -\left(1-2 \eta \right) \frac{ 1 }{\max\limits_{i\in[m]}\left\|\nabla f_{i}\left(x_{k}\right)\right\|_2^{2}}\mathbb{E}^{k}\left[
\left\lvert f_{i_k}(x_k)\right\rvert^{2}\right] \notag
\\
&\leq
\left\|x_{k}-x_{\star}\right\|^{2}_2 -\frac{  1-2 \eta   }{\max\limits_{i\in[m]}\left\|\nabla f_{i}\left(x_{k}\right)\right\|_2^{2}} \sum\limits_{i_k\in\mathcal{I}_k} \frac{\lvert\tilde{d}_{k}^{\left(i_{k}\right)}\rvert^{2}}{\left\|\tilde{d}_{k}\right\|_{2}^{2}}
\left\lvert f_{i_k}(x_k)\right\rvert^{2}, \notag
\end{align}
and from the definitions of $\delta_k$ in (\ref{AL_RD-CNK_delta}) and the index subset $\mathcal{I}_k$ in (\ref{AL_RD-CNK_Ik}), we can further obtain
\begin{align}
\mathbb{E}^{k}\left[\left\| {x}_{k+1}- {x}_{\star}\right\|^{2}_2\right]
&\leq
\left\|x_{k}-x_{\star}\right\|^{2}_2 - \frac{ 1-2 \eta  }{\max\limits_{i\in[m]}\left\|\nabla f_{i}\left(x_{k}\right)\right\|_2^{2}} \sum\limits_{i_k\in\mathcal{I}_k} \frac{\lvert\tilde{d}_{k}^{\left(i_{k}\right)}\rvert^{2}}{\left\|\tilde{d}_{k}\right\|_{2}^{2}}
\delta_k\|f(x_k)\|_2^2\notag
\\
&=
\left\|x_{k}-x_{\star}\right\|^{2}_2 -\frac{  1-2 \eta  }{\max\limits_{i\in[m]}\left\|\nabla f_{i}\left(x_{k}\right)\right\|_2^{2}}
\delta_k\|f(x_k)-f(x_{\star})\|_2^2,\notag
\end{align}
which together with Lemma \ref{lemma3} leads to
\begin{align}
\mathbb{E}^{k}\left[\left\| {x}_{k+1}- {x}_{\star}\right\|^{2}_2\right]
&\leq
\left\|x_{k}-x_{\star}\right\|^{2}_2 - \frac{\left(1-2 \eta \right) \delta_k }{\max\limits_{i\in[m]}\left\|\nabla f_{i}\left(x_{k}\right)\right\|_2^{2}}
\frac{1}{1+\eta^2}\left\|f^{\prime} \left(x_{k}\right)\left(x_{k}-x_{\star}\right)\right\|^2_2\notag
\\
&\leq
\left\|x_{k}-x_{\star}\right\|^{2}_2 -\frac{ 1-2 \eta }{1+\eta^2} \frac{ \delta_k }{\max\limits_{i\in[m]}\left\|\nabla f_{i}\left(x_{k}\right)\right\|_2^{2}}h_2^2(f^{\prime} \left(x_{k}\right))
\left\| x_{k}-x_{\star} \right\|^2_2\notag
\\
&=
\left(1 -\frac{ 1-2 \eta }{1+\eta^2} \frac{ \delta_k h_2^2(f^{\prime} \left(x_{k}\right)) }{\max\limits_{i\in[m]}\left\|\nabla f_{i}\left(x_{k}\right)\right\|_2^{2}}\right)
\left\| x_{k}-x_{\star} \right\|^2_2.\notag
\end{align}
Thus, by taking the full expectation on both sides, we get the desired result (\ref{th:RD-CNK_0}).
\end{proof}

\begin{remark}\label{rek:th_RD-CNK_1}
Since $\max\limits_{i\in[m]}\left\|\nabla f_{i}\left(x_{k}\right)\right\|_2^{2}\leq \|f^{\prime} \left(x_{k}\right)\|^2_F$ and $
 \delta_k=\frac{1}{2}\left(\frac{\max\limits_{i\in[m]}\left\lvert f_{i}(x_k)\right\rvert^{2}}{\|f(x_k)\|_2^2} +\frac{1}{m}\right) \geq\frac{1}{m}$, we have
 $$ \rho_{\text{RD-CNK}}=1 -\frac{ 1-2 \eta }{1+\eta^2} \frac{ \delta_k h_2^2(f^{\prime} \left(x_{k}\right))}{\max\limits_{i\in[m]}\left\|\nabla f_{i}\left(x_{k}\right)\right\|_2^{2}}  <1-\frac{1-2 \eta}{(1+\eta)^{2}   }
 \frac{h_2^2(f^{\prime}(x_{k}))}{ \|f^{\prime}(x_{k}) \|_{F}^{2}m   }=\rho_{\text{NRK}}=\rho_{\text{NURK}},$$
which means that the convergence factor of the RD-CNK method is strictly smaller than that of the NRK and NURK methods presented in \cite{wang2022nonlinear}.
\end{remark}

\section{Multiple samples-based capped methods}
\label{sec: mutiple-methods}
After determining the index subsets $\mathcal{U}_k$ in Algorithm \ref{The DR-CNK method} and $\mathcal{I}_k$ in Algorithm \ref{The RD-CNK method}, we can directly project the current iteration $x_k$ onto the solution space of these subsets leading to the corresponding block methods, i.e., the distance-based block capped nonlinear Kaczmarz (DB-CNK) method listed in Algorithm \ref{The DB-CNK method} and the residual-based block capped nonlinear Kaczmarz (RB-CNK) method shown in Algorithm \ref{The RB-CNK method}.

\begin{algorithm}
\caption{The DB-CNK method}
\label{The DB-CNK method}
\begin{algorithmic}[1]
\Require The initial estimate $x_0\in \mathbb{R}^n$.
\For{$k=0, 1, 2, \cdots $ until convergence,}
\State{Compute $\varepsilon_k=\frac{1}{2}\left(\frac{1}{\|f(x_k)\|_2^2}\max\limits_{i\in[m]}\frac{\left\lvert f_{i}(x_k)\right\rvert^{2}}{\left\|\nabla f_{i}\left(x_{k}\right)\right\|_2^{2}}+\frac{1}{\|f^{\prime}(x_k)\|_F^2}\right) $.
}
\State{Determine the index subset
$$
\mathcal{U}_k=\left\{i\vert \left\lvert f_{i}(x_k)\right\rvert^{2}\geq\varepsilon_k\|f(x_k)\|_2^2\left\|\nabla f_{i}\left(x_{k}\right)\right\|_2^{2}\right\} .
$$
}
\State{Update $x_{k+1}=x_{k}-(f_{\mathcal{U}_k}^{\prime}(x_k))^{\dagger}f_{\mathcal{U}_k}(x_k)$. }
\EndFor
\end{algorithmic}
\end{algorithm}

\begin{algorithm}
\caption{The RB-CNK method}
\label{The RB-CNK method}
\begin{algorithmic}[1]
\Require The initial estimate $x_0\in \mathbb{R}^n$.
\For{$k=0, 1, 2, \cdots $ until convergence,}
\State{Compute $
\delta_k=\frac{1}{2}\left(\frac{\max\limits_{i\in[m]}\left\lvert f_{i}(x_k)\right\rvert^{2}}{\|f(x_k)\|_2^2} +\frac{1}{m}\right)$.
}
\State{Determine the index subset
$$
\mathcal{I}_k=\left\{i\vert \left\lvert f_{i}(x_k)\right\rvert^{2}\geq\delta_k\|f(x_k)\|_2^2 \right\}.
$$
}
\State{Update $x_{k+1}=x_{k}-(f_{\mathcal{I}_k}^{\prime}(x_k))^{\dagger}f_{\mathcal{I}_k}(x_k)$. }
\EndFor
\end{algorithmic}
\end{algorithm}

\begin{remark}
\label{Al34_rmk}
Since the updating index $i_k$ in the DR-CNK method belongs to the subset $\mathcal{U}_k$ which is directly used in the DB-CNK method, we can deduce that the DB-CNK method must converge at least as fast as the DR-CNK method. We can also obtain a similar relationship between the RB-CNK and RD-CNK methods as the index $i_k$ used in the latter also belongs to the subset $\mathcal{I}_k$ which is directly used in the former.

In addition, a parameter $\xi\in (0, 1]$ can be introduced into $\varepsilon_k$ and $\delta_k$ and then obtaining $$\varepsilon_k= \xi\frac{1}{\|f(x_k)\|_2^2}\max\limits_{i\in[m]}\frac{\left\lvert f_{i}(x_k)\right\rvert^{2}}{\left\|\nabla f_{i}\left(x_{k}\right)\right\|_2^{2}} \quad \text{and} \quad
\delta_k=\xi\frac{\max\limits_{i\in[m]}\left\lvert f_{i}(x_k)\right\rvert^{2}}{\|f(x_k)\|_2^2} $$ as discussed in \cite{niu2020greedy,zhang2020greedy}. Further, if $\lvert \mathcal{U}_k \rvert=\lvert \mathcal{I}_k \rvert=1$, the DB-CNK and RB-CNK methods will respectively reduce to the MD-NK and MR-NK methods presented in \cite{zhang2022greedy}.
\end{remark}

\section{Convergence analysis}
\label{sec:theory_mutiple-methods}

In this section, we establish the convergence theorems of the DB-CNK and RB-CNK methods.

\begin{theorem}
\label{th:DB-CNK}
If the nonlinear function $f $ satisfies the local tangential cone condition given in Definition \ref{definition}, $\eta=\max\limits _{i\in[m]}
\eta_{i}<\frac{1}{2}$, $f(x_{\star})=0$ and $
\alpha=h_2^2((f_{\mathcal{U} }^{\prime}(x_k))^{\dagger})-2\eta\sigma^2_{\max}\left((f_{\mathcal{U} }^{\prime}(x_k))^{\dagger}\right)>0,$
where
\begin{align}
h_2^2((f_{\mathcal{U}}^{\prime}(x_k))^{\dagger})= \min \limits _{\mathcal{U}_{k}} h_2^2((f_{\mathcal{U}_{k}}^{\prime}(x_k))^{\dagger}) \textrm{ and } \sigma^2_{\max}\left((f_{\mathcal{U} }^{\prime}(x_k))^{\dagger}\right)= \max \limits _{\mathcal{U}_{k}}\sigma^2_{\max}\left((f_{\mathcal{U}_{k}}^{\prime}(x_k))^{\dagger}\right), \notag
\end{align}
then the iterations of the DB-CNK method in Algorithm \ref{The DB-CNK method} satisfy
\begin{align}
\left\| {x}_{k+1}- {x}_{\star}\right\|^{2}_2 \leq
\left(1-\alpha \min \limits _{i\in[m]} \left\|\nabla f_{i}\left(x_{k}\right)\right\|_2^{2} \lvert\mathcal{U}_{k}\rvert \varepsilon_k\frac{1}{1+\eta^2}h_2^2(f^{\prime} \left(x_{k}\right))\right)
\left\|x_{k}-{x}_{\star}\right\|^{2}_2.\label{th:DB-CNK_0}
 \end{align}
\end{theorem}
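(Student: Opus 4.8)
\section*{Proof proposal}

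The plan is to run the same argument as in the single-sample Theorem~\ref{th:DR-CNK}, but anchored on the block estimate of Lemma~\ref{sec:theory-mutiple-lemma1} instead of Lemma~\ref{lemma1}, and to note at the outset that no expectation is involved because $\mathcal{U}_k$ is determined deterministically by $x_k$. First I would apply Lemma~\ref{sec:theory-mutiple-lemma1} with $\tau_k=\mathcal{U}_k$ to obtain
\begin{align}
\left\|x_{k+1}-x_{\star}\right\|^{2}_2 \leq \left\|x_{k}-x_{\star}\right\|^{2}_2 - \left(h_2^2((f_{\mathcal{U}_k}^{\prime}(x_k))^{\dagger}) - 2\eta\,\sigma^2_{\max}((f_{\mathcal{U}_k}^{\prime}(x_k))^{\dagger})\right)\left\|f_{\mathcal{U}_k}(x_k)\right\|^{2}_2. \notag
\end{align}
Since $h_2^2((f_{\mathcal{U}_k}^{\prime}(x_k))^{\dagger}) \geq \min_{\mathcal{U}_k} h_2^2((f_{\mathcal{U}_k}^{\prime}(x_k))^{\dagger})$ while $\sigma^2_{\max}((f_{\mathcal{U}_k}^{\prime}(x_k))^{\dagger}) \leq \max_{\mathcal{U}_k}\sigma^2_{\max}((f_{\mathcal{U}_k}^{\prime}(x_k))^{\dagger})$, the bracketed coefficient is at least $\alpha>0$, hence
\begin{align}
\left\|x_{k+1}-x_{\star}\right\|^{2}_2 \leq \left\|x_{k}-x_{\star}\right\|^{2}_2 - \alpha\left\|f_{\mathcal{U}_k}(x_k)\right\|^{2}_2. \notag
\end{align}

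Next I would bound $\|f_{\mathcal{U}_k}(x_k)\|^{2}_2$ from below using the defining inequality of $\mathcal{U}_k$ in (\ref{AL_DR-CNK_Uk}): for every $i\in\mathcal{U}_k$ we have $|f_i(x_k)|^2 \geq \varepsilon_k\|f(x_k)\|_2^2\|\nabla f_i(x_k)\|_2^2 \geq \varepsilon_k\|f(x_k)\|_2^2\min_{i\in[m]}\|\nabla f_i(x_k)\|_2^2$, and summing these $\lvert\mathcal{U}_k\rvert$ inequalities gives
\begin{align}
\left\|f_{\mathcal{U}_k}(x_k)\right\|^{2}_2 = \sum_{i\in\mathcal{U}_k}|f_i(x_k)|^2 \geq \lvert\mathcal{U}_k\rvert\,\varepsilon_k\min_{i\in[m]}\|\nabla f_i(x_k)\|_2^2\,\|f(x_k)\|^{2}_2. \notag
\end{align}
Then, invoking $f(x_\star)=0$ together with Lemma~\ref{lemma3} applied with $\tau=[m]$, $x_1=x_k$, $x_2=x_\star$, and the definition of $h_2$, I get $\|f(x_k)\|^{2}_2 = \|f(x_k)-f(x_\star)\|^{2}_2 \geq \frac{1}{1+\eta^2}\|f^{\prime}(x_k)(x_k-x_\star)\|^{2}_2 \geq \frac{1}{1+\eta^2}h_2^2(f^{\prime}(x_k))\|x_k-x_\star\|^{2}_2$. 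Substituting the last two lower bounds into the recursion and factoring out $\|x_k-x_\star\|^{2}_2$ yields exactly (\ref{th:DB-CNK_0}).

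I do not expect a genuine obstacle here; the calculation is essentially a chain of substitutions. The only points that require care are: (i) checking that replacing the iteration-dependent quantities $h_2^2((f_{\mathcal{U}_k}^{\prime}(x_k))^{\dagger})$ and $\sigma^2_{\max}((f_{\mathcal{U}_k}^{\prime}(x_k))^{\dagger})$ by the worst-case values in the definition of $\alpha$ preserves the direction of the inequality — which it does precisely because the favourable term carries $h_2^2$ (bounded below by a min) and the unfavourable term carries $\sigma^2_{\max}$ (bounded above by a max); and (ii) noting that $\mathcal{U}_k$ is nonempty, as established in Remark~\ref{Al_rmk1}, so that $\lvert\mathcal{U}_k\rvert\geq 1$ and the contraction factor in (\ref{th:DB-CNK_0}) is nontrivial. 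If one wishes to compare $\rho_{\text{DB-CNK}}$ with $\rho_{\text{DR-CNK}}$, one can additionally remark (cf.\ Remark~\ref{Al34_rmk}) that the per-step decrease here dominates the single-sample one, but this is not needed for the statement itself.
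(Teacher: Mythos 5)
Your proposal is correct and follows essentially the same route as the paper's own proof: Lemma~\ref{sec:theory-mutiple-lemma1} with $\tau_k=\mathcal{U}_k$, replacement of the bracketed coefficient by $\alpha$, the lower bound $\|f_{\mathcal{U}_k}(x_k)\|_2^2\geq \lvert\mathcal{U}_k\rvert\,\varepsilon_k\min_{i\in[m]}\|\nabla f_i(x_k)\|_2^2\,\|f(x_k)\|_2^2$ from the definition of $\mathcal{U}_k$, and finally Lemma~\ref{lemma3} together with the definition of $h_2$. The only cosmetic difference is that the paper reaches the same lower bound by writing $\|f_{\mathcal{U}_k}(x_k)\|_2^2$ as a weighted sum and extracting $\min_i\|\nabla f_i(x_k)\|_2^2$ first, whereas you apply the membership inequality directly; the two computations coincide.
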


\begin{proof}
According to Lemma \ref{sec:theory-mutiple-lemma1}, the definition of $\alpha$, and Algorithm \ref{The DB-CNK method}, we have
\begin{align}
\left\|x_{k+1}-x_{\star}\right\|^{2}_2
&\leq
\left\|x_{k}-x_{\star}\right\|^{2}_2-  \left(h_2^2((f_{\mathcal{U}_{k}}^{\prime}(x_k))^{\dagger})-2\eta\sigma^2_{\max}\left((f_{\mathcal{U}_{k}}^{\prime}(x_k))^{\dagger}\right)\right)\left\|  f_{\mathcal{U}_{k}}(x_k)\right\|^{2}_2  \notag
\\
&\leq
\left\|x_{k}-x_{\star}\right\|^{2}_2- \alpha\left\|  f_{\mathcal{U}_{k}}(x_k)\right\|^{2}_2 \notag
\\
&=
\left\|x_{k}-x_{\star}\right\|^{2}_2- \alpha \sum\limits_{j\in\mathcal{U}_{k} } \frac{\left\lvert  f_{j}(x_k)\right\rvert^{2} }{\left\|\nabla f_{j}\left(x_{k}\right)\right\|_2^{2}}\left\|\nabla f_{j}\left(x_{k}\right)\right\|_2^{2}\notag
\\
&\leq
\left\|x_{k}-x_{\star}\right\|^{2}_2-\alpha  \min \limits _{i\in[m]} \left\|\nabla f_{i}\left(x_{k}\right)\right\|_2^{2} \sum\limits_{j\in\mathcal{U}_{k} } \frac{\left\lvert  f_{j}(x_k)\right\rvert^{2} }{\left\|\nabla f_{j}\left(x_{k}\right)\right\|_2^{2}}  \notag
\\
&\leq
\left\|x_{k}-x_{\star}\right\|^{2}_2-\alpha \min \limits _{i\in[m]} \left\|\nabla f_{i}\left(x_{k}\right)\right\|_2^{2} \lvert\mathcal{U}_{k}\rvert \varepsilon_k\|f(x_k)\|_2^2, \notag
\end{align}
which together with Lemma \ref{lemma3} yields
\begin{align}
\left\|x_{k+1}-x_{\star}\right\|^{2}_2
&\leq
\left\|x_{k}-x_{\star}\right\|^{2}_2-\alpha \min \limits _{i\in[m]} \left\|\nabla f_{i}\left(x_{k}\right)\right\|_2^{2} \lvert\mathcal{U}_{k}\rvert \varepsilon_k\frac{1}{1+\eta^2}\left\|f^{\prime} \left(x_{k}\right)\left(x_{k}-x_{\star}\right)\right\|^2_2 \notag
\\
&\leq
\left(1-\alpha \min \limits _{i\in[m]} \left\|\nabla f_{i}\left(x_{k}\right)\right\|_2^{2} \lvert\mathcal{U}_{k}\rvert \varepsilon_k\frac{1}{1+\eta^2}h_2^2(f^{\prime} \left(x_{k}\right))\right)\left\|x_{k}-x_{\star}\right\|^2_2. \notag
\end{align}
So, the desired result (\ref{th:DB-CNK_0}) is obtained.
\end{proof}

\begin{remark}
\label{rek_th_DB-CNK-1}
Since $0\leq  \left\| {x}_{k+1}- {x}_{\star}\right\|^{2}_2 $ and $\alpha \min \limits _{i\in[m]} \left\|\nabla f_{i}\left(x_{k}\right)\right\|_2^{2} \lvert\mathcal{U}_{k}\rvert \varepsilon_k\frac{h_2^2(f^{\prime} \left(x_{k}\right))}{1+\eta^2}>0$, we have
\begin{align}
0
\leq
\left\|x_{k}-x_{\star}\right\|^2_2-\alpha \min \limits _{i\in[m]} \left\|\nabla f_{i}\left(x_{k}\right)\right\|_2^{2} \lvert\mathcal{U}_{k}\rvert \varepsilon_k\frac{h_2^2(f^{\prime} \left(x_{k}\right))}{1+\eta^2} \left\|x_{k}-x_{\star}\right\|^2_2
 <\left\|x_{k}-x_{\star}\right\|^2_2,\notag
 \end{align}
which implies that the convergence factor of the DB-CNK method is smaller than 1. Similarly, we can get that the convergence factor of the RB-CNK method presented in Theorem \ref{th:RB-CNK} is also smaller than 1.
\end{remark}

\begin{theorem}
\label{th:RB-CNK}
If the nonlinear function $f $ satisfies the local tangential cone condition given in Definition \ref{definition}, $\eta=\max\limits _{i\in[m]}
\eta_{i}<\frac{1}{2}$, $f(x_{\star})=0$ and $
\beta=h_2^2((f_{\mathcal{I} }^{\prime}(x_k))^{\dagger})-2\eta\sigma^2_{\max}\left((f_{\mathcal{I} }^{\prime}(x_k))^{\dagger}\right)>0,$
where
\begin{align}
h_2^2((f_{\mathcal{I}}^{\prime}(x_k))^{\dagger})= \min \limits _{\mathcal{I}_{k}} h_2^2((f_{\mathcal{I}_{k}}^{\prime}(x_k))^{\dagger}) \textrm{ and } \sigma^2_{\max}\left((f_{\mathcal{I} }^{\prime}(x_k))^{\dagger}\right)= \max \limits _{\mathcal{I}_{k}}\sigma^2_{\max}\left((f_{\mathcal{I}_{k}}^{\prime}(x_k))^{\dagger}\right), \notag
\end{align}
then the iterations of the RB-CNK method in Algorithm \ref{The RB-CNK method} satisfy
\begin{align}
\left\| {x}_{k+1}- {x}_{\star}\right\|^{2}_2 \leq
\left(1-\beta \lvert\mathcal{I}_{k}\rvert \delta_k\frac{1}{1+\eta^2}h_2^2(f^{\prime} \left(x_{k}\right))\right)
\left\|x_{k}-{x}_{\star}\right\|^{2}_2.\label{th:RB-CNK_0}
 \end{align}
\end{theorem}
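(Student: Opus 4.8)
The plan is to follow the same line of reasoning used for the DB-CNK method in Theorem~\ref{th:DB-CNK}, noting that the present situation is actually simpler because the index set $\mathcal{I}_k$ in~(\ref{AL_RD-CNK_Ik}) is defined purely through the residual magnitudes, so no factor $\min_{i\in[m]}\|\nabla f_i(x_k)\|_2^2$ enters. First I would apply Lemma~\ref{sec:theory-mutiple-lemma1} with $\tau_k=\mathcal{I}_k$ and the block update $x_{k+1}=x_k-(f_{\mathcal{I}_k}^{\prime}(x_k))^{\dagger}f_{\mathcal{I}_k}(x_k)$ to obtain $\|x_{k+1}-x_\star\|_2^2\le\|x_k-x_\star\|_2^2-\bigl(h_2^2((f_{\mathcal{I}_k}^{\prime}(x_k))^{\dagger})-2\eta\sigma_{\max}^2((f_{\mathcal{I}_k}^{\prime}(x_k))^{\dagger})\bigr)\|f_{\mathcal{I}_k}(x_k)\|_2^2$. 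Using the definitions $h_2^2((f_{\mathcal{I}}^{\prime}(x_k))^{\dagger})=\min_{\mathcal{I}_k}h_2^2((f_{\mathcal{I}_k}^{\prime}(x_k))^{\dagger})$ and $\sigma_{\max}^2((f_{\mathcal{I}}^{\prime}(x_k))^{\dagger})=\max_{\mathcal{I}_k}\sigma_{\max}^2((f_{\mathcal{I}_k}^{\prime}(x_k))^{\dagger})$ together with the hypothesis $\beta>0$, the parenthesised quantity is bounded below by $\beta$, giving $\|x_{k+1}-x_\star\|_2^2\le\|x_k-x_\star\|_2^2-\beta\|f_{\mathcal{I}_k}(x_k)\|_2^2$.

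The second step is to lower bound $\|f_{\mathcal{I}_k}(x_k)\|_2^2=\sum_{j\in\mathcal{I}_k}|f_j(x_k)|^2$. By the very definition of $\mathcal{I}_k$ each term satisfies $|f_j(x_k)|^2\ge\delta_k\|f(x_k)\|_2^2$, so summing over the $\lvert\mathcal{I}_k\rvert$ indices yields $\|f_{\mathcal{I}_k}(x_k)\|_2^2\ge\lvert\mathcal{I}_k\rvert\delta_k\|f(x_k)\|_2^2$, hence $\|x_{k+1}-x_\star\|_2^2\le\|x_k-x_\star\|_2^2-\beta\lvert\mathcal{I}_k\rvert\delta_k\|f(x_k)\|_2^2$. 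Finally, writing $\|f(x_k)\|_2^2=\|f(x_k)-f(x_\star)\|_2^2$ (using $f(x_\star)=0$) and invoking Lemma~\ref{lemma3} with $\tau=[m]$ followed by the definition of $h_2$, I get $\|f(x_k)\|_2^2\ge\frac{1}{1+\eta^2}\|f^{\prime}(x_k)(x_k-x_\star)\|_2^2\ge\frac{1}{1+\eta^2}h_2^2(f^{\prime}(x_k))\|x_k-x_\star\|_2^2$. Substituting this into the previous inequality produces~(\ref{th:RB-CNK_0}) at once; note that, unlike the single-sample theorems, no conditional expectation or tower-rule step is required since the RB-CNK iteration is deterministic given $x_k$.

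I do not anticipate a genuine obstacle here: the argument is essentially a transcription of the DB-CNK proof with the gradient-norm bookkeeping removed. The only points needing care are (i) that $\beta$, assembled from a minimum of the $h_2^2$ terms and a maximum of the $\sigma_{\max}^2$ terms over all admissible subsets, is a legitimate uniform lower bound for the per-iteration contraction factor no matter which $\mathcal{I}_k$ the algorithm actually generates --- this is precisely why the min and max are taken in those directions --- and (ii) that the hypothesis $\beta>0$ is what guarantees the subtracted term is nonnegative, so that the resulting factor is genuinely below $1$ (as already spelled out for DB-CNK in Remark~\ref{rek_th_DB-CNK-1}).
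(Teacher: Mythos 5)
Your proposal is correct and follows essentially the same route as the paper's own proof: Lemma \ref{sec:theory-mutiple-lemma1} with $\tau_k=\mathcal{I}_k$, the bound $\|f_{\mathcal{I}_k}(x_k)\|_2^2\geq\lvert\mathcal{I}_k\rvert\delta_k\|f(x_k)\|_2^2$ from the definition of $\mathcal{I}_k$, and then Lemma \ref{lemma3} together with the definition of $h_2$. Your added observations (why the $\min$/$\max$ directions in $\beta$ give a uniform bound, and that no expectation step is needed) are accurate but do not change the argument.
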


\begin{proof}
From Lemma \ref{sec:theory-mutiple-lemma1}, the definition of $\beta$, and Algorithm \ref{The RB-CNK method}, we get
\begin{align}
\left\|x_{k+1}-x_{\star}\right\|^{2}_2
&\leq
\left\|x_{k}-x_{\star}\right\|^{2}_2-  \left(h_2^2((f_{\mathcal{I}_{k}}^{\prime}(x_k))^{\dagger})-2\eta\sigma^2_{\max}\left((f_{\mathcal{I}_{k}}^{\prime}(x_k))^{\dagger}\right)\right)\left\|  f_{\mathcal{I}_{k}}(x_k)\right\|^{2}_2  \notag
\\
&\leq
\left\|x_{k}-x_{\star}\right\|^{2}_2- \beta\left\|  f_{\mathcal{I}_{k}}(x_k)\right\|^{2}_2 \notag
\\
&=
\left\|x_{k}-x_{\star}\right\|^{2}_2-\beta\sum\limits_{j\in\mathcal{I}_{k} } \left\lvert  f_{j}(x_k)\right\rvert^{2} \notag
\\
&\leq
\left\|x_{k}-x_{\star}\right\|^{2}_2-\beta \lvert \mathcal{I}_k \rvert  \delta_k\|f(x_k)\|_2^2, \notag
\end{align}
which together with Lemma \ref{lemma3} yields
\begin{align}
\left\|x_{k+1}-x_{\star}\right\|^{2}_2
&\leq
\left\|x_{k}-x_{\star}\right\|^{2}_2-\beta \lvert \mathcal{I}_k \rvert  \delta_k\frac{1}{1+\eta^2}\left\|f^{\prime} \left(x_{k}\right)\left(x_{k}-x_{\star}\right)\right\|^2_2 \notag
\\
&\leq
\left(1-\beta \lvert \mathcal{I}_k \rvert  \delta_k\frac{1}{1+\eta^2}h_2^2(f^{\prime} \left(x_{k}\right))\right)\left\|x_{k}-x_{\star}\right\|^2_2. \notag
\end{align}
So, the desired result (\ref{th:RB-CNK_0}) is obtained.
\end{proof}

\section{Experimental results}
\label{sec:experiments}
In this section, we mainly compare our new methods, i.e., the DR-CNK, RD-CNK, DB-CNK and RB-CNK methods, with the existing methods for solving
Brown almost linear function and generalized linear model (GLM) in terms of the iteration numbers (denoted as ``IT'') and computing time in seconds (denoted as ``CPU''). The IT and CPU here are respectively the average of the IT and CPU over 10 runs of the algorithm and all experiments terminate once $ \left\|f(x_{k})\right\|_2^2<10^{-6} $ or the number of iterations exceeds 200000.

\subsection{Brown almost linear function}
\label{sec:Brown almost linear function}
The function \cite{more1981testing, wang2022nonlinear} is expressed as follows
$$
\begin{array}{lr}
f_k(x)=x^{(k)}+\sum_{i=1}^n x^{(i)}-(n+1), & 1 \leq k<n;\\
f_k(x)=\left(\prod_{i=1}^n x^{(i)}\right)-1, & k=n.
\end{array}
$$
Here we only compare our methods with the NRK method because the authors in \cite{wang2022nonlinear} have compared the NRK method with other methods in detail and concluded that the NRK method performed better in most cases. All experiments are start at $x_{0}=0.5*ones(n,1)$.

We list the iteration numbers and computing time for the DR-CNK, RD-CNK, DB-CNK, RB-CNK and NRK methods
in Tables \ref{tab_Brown_IT} and \ref{tab_Brown_CPU}. They 
show that our new methods vastly outperform the NRK method. 
In particular, 
the time speedup of the single sample-based methods, i.e., the DR-CNK and RD-CNK methods, against to the NRK method is maintained at about 10 times for most cases, and the time speedup of the multiple samples-based methods, i.e., the DB-CNK and RB-CNK methods, against to the NRK method can even reach 200 times, as can be seen in the last two rows of Table \ref{tab_Brown_CPU}. Overall, the numerical results indicate that our new greedy capped schemes are better than the randomized strategy used in the NRK method.

\begin{table}[tp]
  \centering
   \fontsize{8}{8} \selectfont
    \caption{IT comparison of our methods and the NRK method.}
    \label{tab_Brown_IT}
    \begin{tabular}{cccccc}
    \hline
$m \times n$    &NRK&DR-CNK& RD-CNK&  DB-CNK &  RB-CNK \cr
 \hline
$50\times 50$    &  4780.2  &  755.2   &  755     & 1    &1  \cr
$100\times 100$  &  16218   &  1306    &  1308    & 1    &1  \cr
$150\times 150$  &  33764   &  1902    &  1902    & 1    &1  \cr
$200\times 200$  &  57119   &  2516    &  2506.4  & 1    &1    \cr
$250\times 250$  &  84874   &  3134    &  3128    & 1    &1  \cr
$300\times 300$  &  117100  &  3750.2  &  3750    & 1    &1   \cr
$350\times 350$  &  157190  &  4371.8  &  4371.8  & 1    &1  \cr
$400\times 400$  &  199400  &  4992.4  &  4992.4  & 1    &1   \cr
\hline
\end{tabular}
\end{table}

\begin{table}[tp]
  \centering
   \fontsize{8}{8} \selectfont
    \caption{CPU comparison of our methods and the NRK method.}
    \label{tab_Brown_CPU}
    \begin{tabular}{ccccccc}
    \hline
    $m \times n$    & NRK       &  DR-CNK   & RD-CNK  &  DB-CNK &  RB-CNK \cr
 \hline
$50\times 50$       & 0.2766    & 0.0656    & 0.0578  &  0.0078 &  0.0094 \cr
$100\times 100$     & 0.6078    & 0.1453    & 0.0969  &  0.0172 &  0.0125 \cr
$150\times 150$     & 1.1656    & 0.2141    & 0.1359  &  0.0125 &  0.0281 \cr
$200\times 200$     & 2.0438    & 0.2969    & 0.1953  &  0.0063 &  0.0078 \cr
$250\times 250$     & 3.0078    & 0.3391    & 0.2359  &  0.0078 &  0.0422 \cr
$300\times 300$     & 4.1422    & 0.3984    & 0.2797  &  0.0156 &  0.0156 \cr
$350\times 350$     & 5.7625    & 0.4609    & 0.3359  &  0.0266 &  0.0187 \cr
$400\times 400$     & 7.5219    & 0.4922    & 0.3922  &  0.0500 &  0.0375 \cr
\hline
\end{tabular}
\end{table}

\subsection{GLM}
\label{sec:logistic regression}
The regularized GLM has the form
$$ \min \limits _{w \in \mathbb{R}^d} P(w) \stackrel{\text { def }}{=} \frac{1}{p} \sum_{i=1}^p \phi_i\left(a_i^{\top} w\right)+\frac{\lambda}{2}\|w\|^2,$$ where $\phi_i(t)=\ln \left(1+\mathrm{e}^{-y_i t}\right)$ is the logistic loss, $y_i\in\{-1, 1\}$ is the $i$th target value, $a_i \in \mathbb{R}^d$ is $i$th data sample, and $w \in \mathbb{R}^d$ is the parameter to optimize. By adopting the equivalent transformation discussed in \cite{yuan2022sketched}, we can get the following nonlinear problem:
$$
f(x) \stackrel{\text { def }}{=}\left[\begin{array}{c}
\frac{1}{\lambda p} A \alpha-w \\
\alpha+\Phi(w)
\end{array}\right]=0
,$$
where $f: \mathbb{R}^{p+d} \rightarrow \mathbb{R}^{p+d}$, $x=\left[\begin{array}{c}
 \alpha \\
w
\end{array}\right]\in \mathbb{R}^{p+d} $, $A \stackrel{\text { def }}{=}\left[\begin{array}{lll}a_1, & \cdots, & a_p\end{array}\right] \in \mathbb{R}^{d \times p}$, $\Phi(w) \stackrel{\text { def }}{=}\left[\phi_1^{\prime}\left(a_1^{T} w\right), \cdots, \phi_p^{\prime}\left(a_p^{T} w\right)\right]^{T} \in \mathbb{R}^p$ and $\alpha=-\Phi(w)$. For the problem, we only compare our methods with the sketched Newton-Raphson (SNR) \cite{yuan2022sketched} to further illustrate the advantages of our greedy capped sampling over uniform sampling.

We use datasets in the experiments for GLM taken from \cite{chang2011libsvm} on \url{https://www.csie.ntu.edu.tw/~cjlin/libsvmtools/datasets/} and the scaled versions are applied if provided. They have disparate properties, either ill or well conditioned, dense or sparse; see details in Table \ref{tab_properties of dataset}. Note that in Table \ref{tab_properties of dataset}, C.N is the condition number of the data matrix $A$, the smoothness constant $L\stackrel{\text { def }}{=}\frac{ \lambda_{\max}(AA^T)}{4p}+\lambda$ and $$\text{density}\stackrel{\text { def }}{=}\frac{\text{number of nonzero of a $d\times p$ data matrix $A$}}{dp}.$$ For all methods, we use $\lambda=\frac{1}{p}$ as the regularization parameter and let the initial value $x_0$ be zero, i.e., $w_0 =0\in \mathbb{R}^d$ and $\alpha_0 =0\in \mathbb{R}^p$.

\begin{table}[]
\centering
   \fontsize{8}{8}\selectfont
       \caption{Details of the data sets for GLM.}
    \label{tab_properties of dataset}
    \begin{tabular}{ c| c c c c  c }
 \hline
   dataset       &dimension $(d)$   &  samples $(p)$    & $L$           & C.N               &density\cr \hline
   fourclass     &     2            &  862              & 0.0824        &  1.0757           &0.9959\cr
   german.numer  &    24            & 1000              &  2.1113       & 15.4082           &0.9584\cr
   heart         &    13            &  270              & 0.6973        &  7.0996           &0.9624\cr
   ionosphere    &    34            &  351              &1.5290         & 2.4485e+17        &0.8841\cr
   diabetes      &    8             & 768               & 0.5740        &  8.2105           &0.9985\cr
   sonar         &    60            & 208               & 3.2282        &  89.9388          &0.9999\cr
   w1a           &    300           & 2477              & 0.6224        &  3.4303e+34       &0.0382\cr
   w2a           &    300           & 3470              & 0.6347        &  1.0416e+34       &0.0388\cr
   w3a           &    300           & 4912              & 0.6436        &3.9777e+33         &0.0388\cr
   \hline
\end{tabular}
\end{table}

For the SNR method, different parameter settings will lead to different methods, in which the Kaczmarz-TCS and Block TCS methods are discussed in detail as the main representatives \cite{yuan2020sketched,yuan2022sketched}. They 
are single-sample and multi-sample methods, respectively. So, we will compare the single-sample methods, i.e., the DR-CNK, RD-CNK and Kaczmarz-TCS methods, and the multi-sample methods, i.e., the DB-CNK, RB-CNK and Block TCS methods,  separately. 

The parameters used in the example are
$\gamma=1$, $\tau_d=d$, and $\tau_p=150$, and the Bernoulli parameter $b=\frac{p}{(p+\tau_p)}-0.11$. In the DB-CNK and RB-CNK methods, we adopt the same iterative framework as the Block TCS method. Specifically, the least norm solution is computed directly for the first $d$ rows of $f$, while the greedy capped strategies in Algorithms \ref{The DB-CNK method} and \ref{The RB-CNK method} are used for the last $p$ rows of $f$. In doing so, we can not only use the structure of the nonlinear function $f$, but also directly compare the relationship between greedy capped sampling and uniform sampling.

We show the results of our methods compared to the SNR method, i.e., the Kaczmarz-TCS and Block TCS methods, on GLM in Figures \ref{fig_single_sample_IT} to \ref{fig_multi_samples}. In  the first two figures, 
we test six datasets for single-sample methods and find that the performance of our DR-CNK and RD-CNK methods is not much different, but in most cases, the former is slightly better than the latter in terms of iteration numbers and computing time, and both methods are more efficient than the Kaczmarz-TCS method. 
In Figure \ref{fig_multi_samples}, we test three datasets for multi-sample methods and get that our RB-CNK and DB-CNK methods perform about the same, and both methods outperform the Block TCS method. All these figures imply that the new greedy capped strategies are feasible and outperform the uniform sampling.

\begin{figure}[ht]
 \begin{center}
\includegraphics [height=3.5cm,width=3.8cm ]{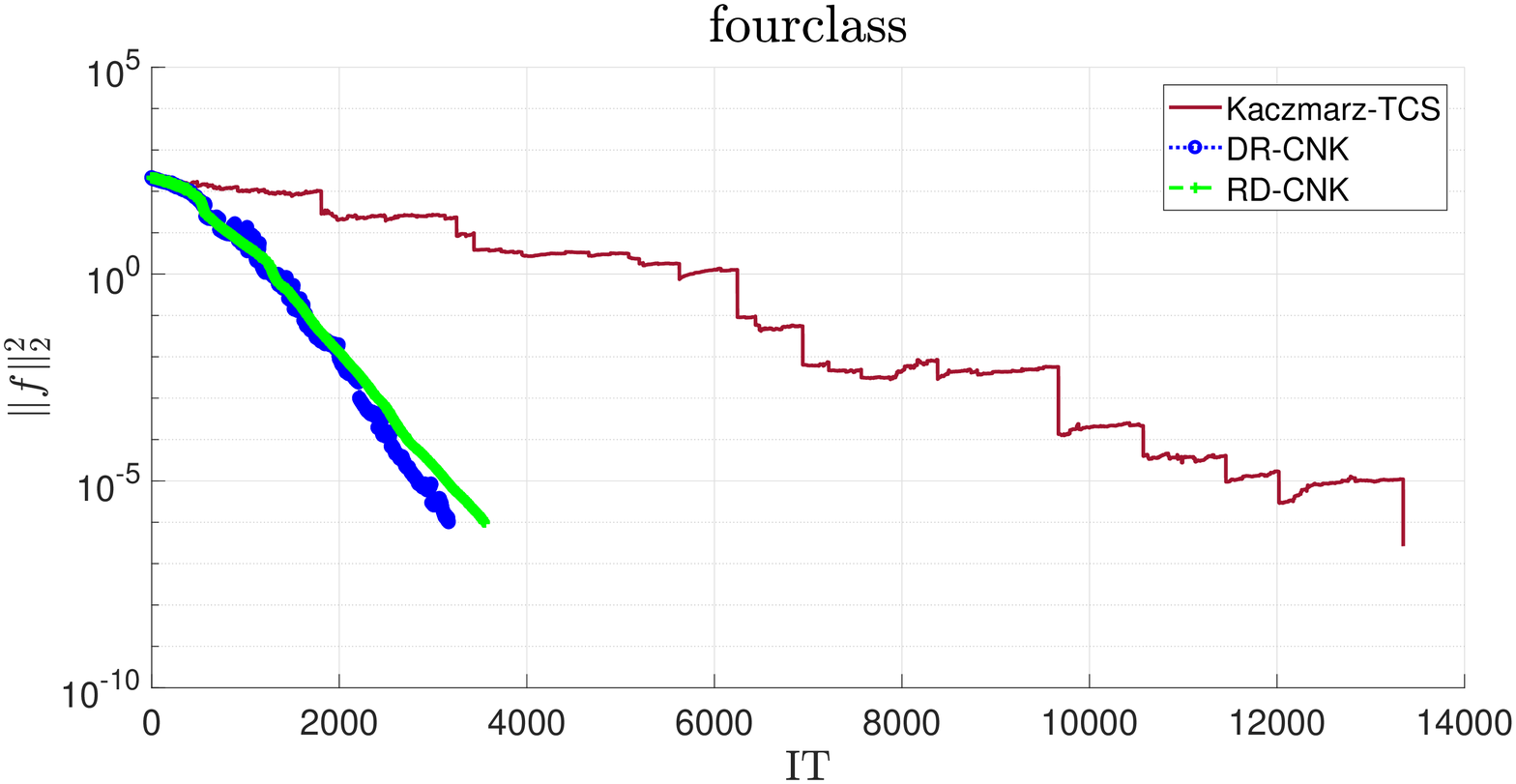}
\includegraphics [height=3.5cm,width=3.8cm ]{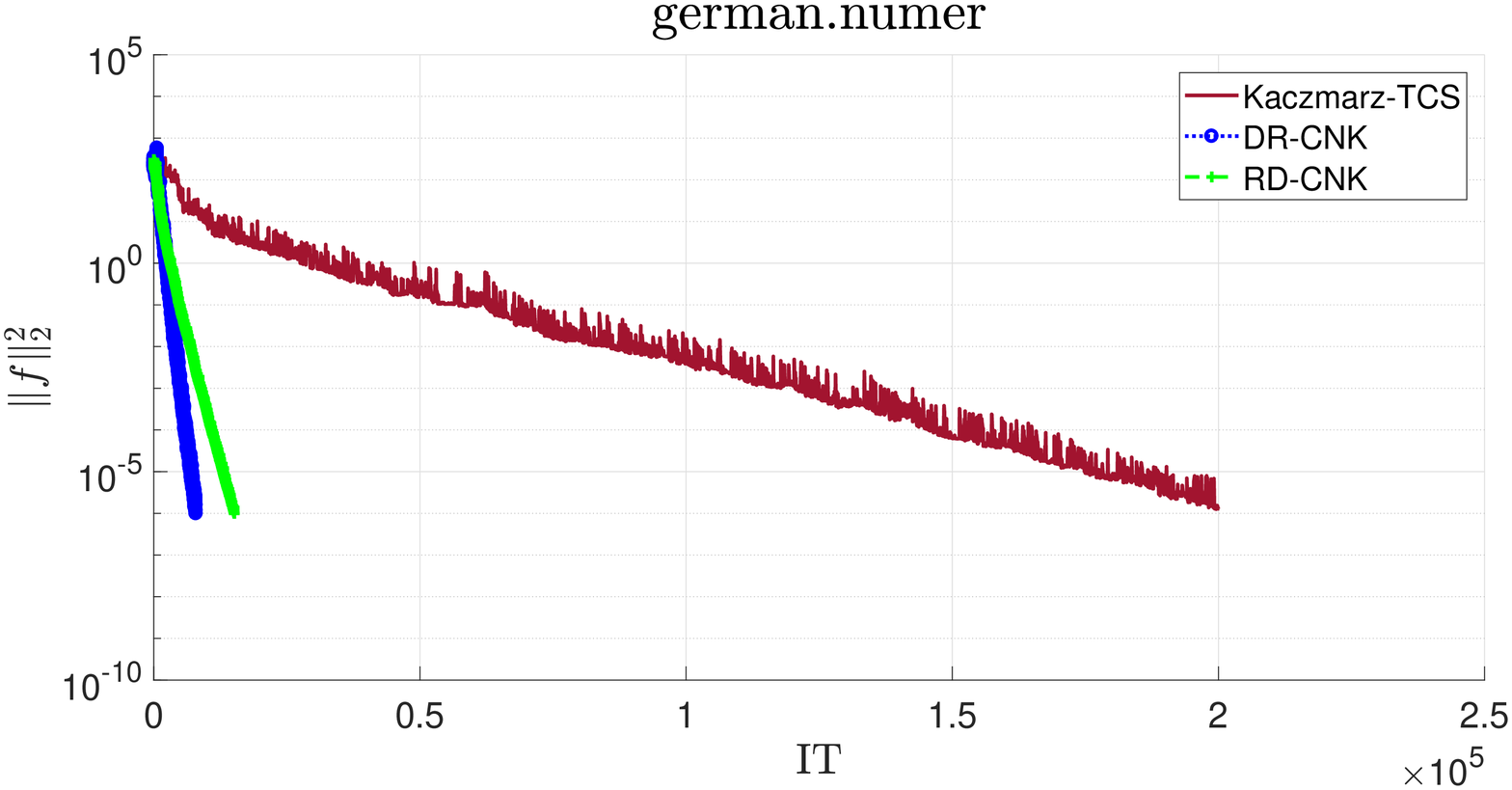}
\includegraphics [height=3.5cm,width=3.8cm ]{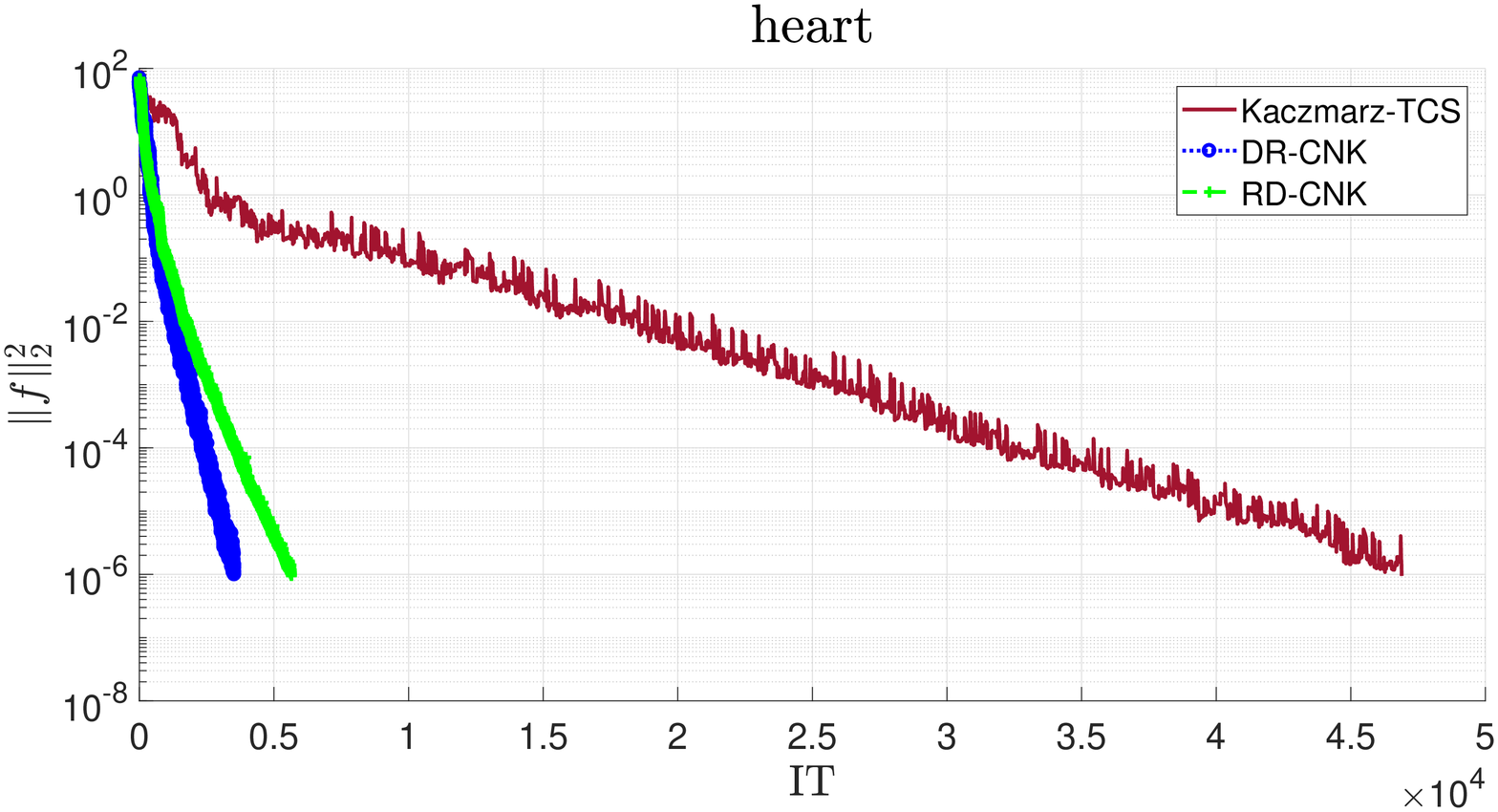}
\includegraphics [height=3.5cm,width=3.8cm ]{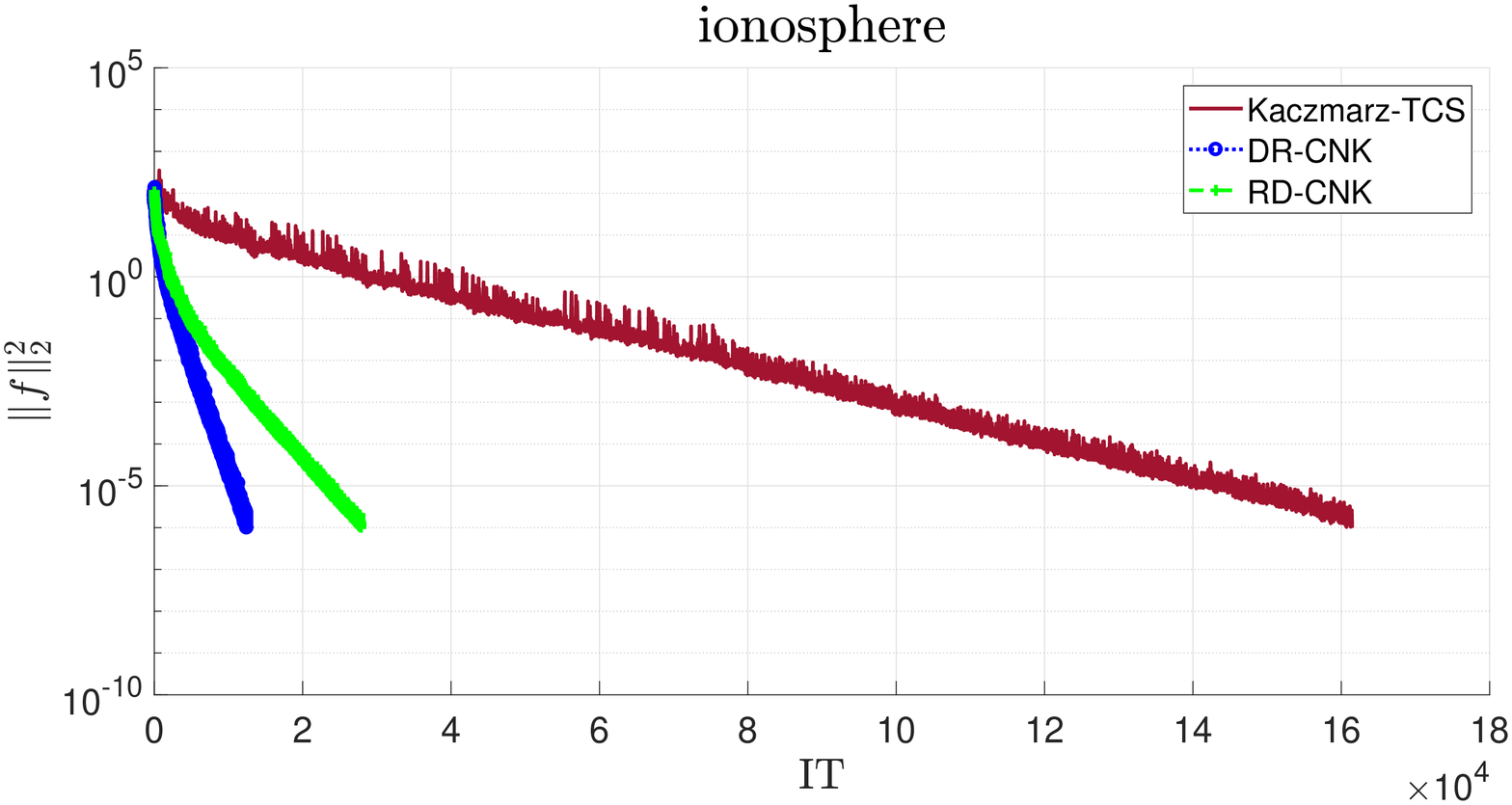}
\includegraphics [height=3.5cm,width=3.8cm ]{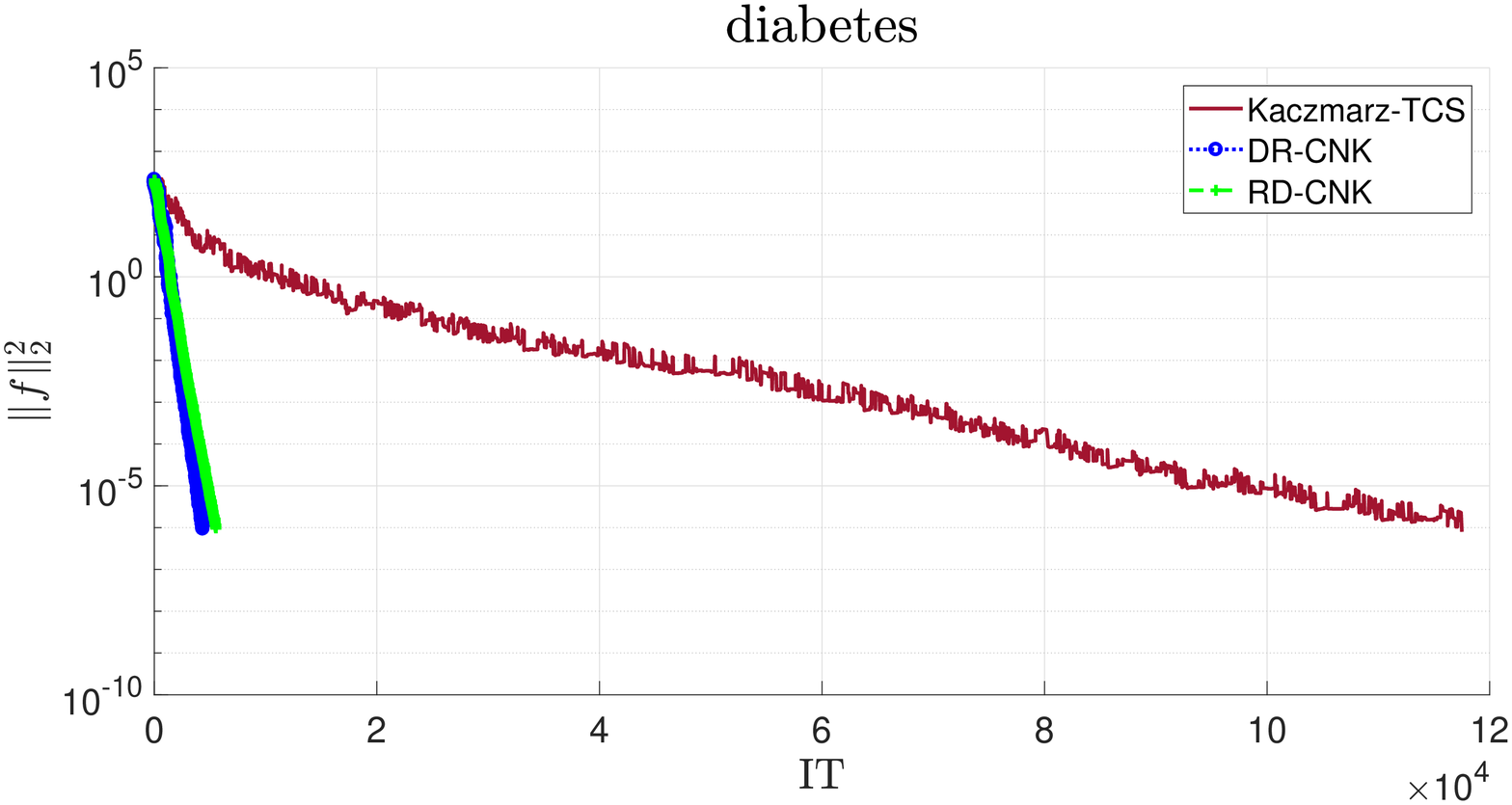}
\includegraphics [height=3.5cm,width=3.8cm ]{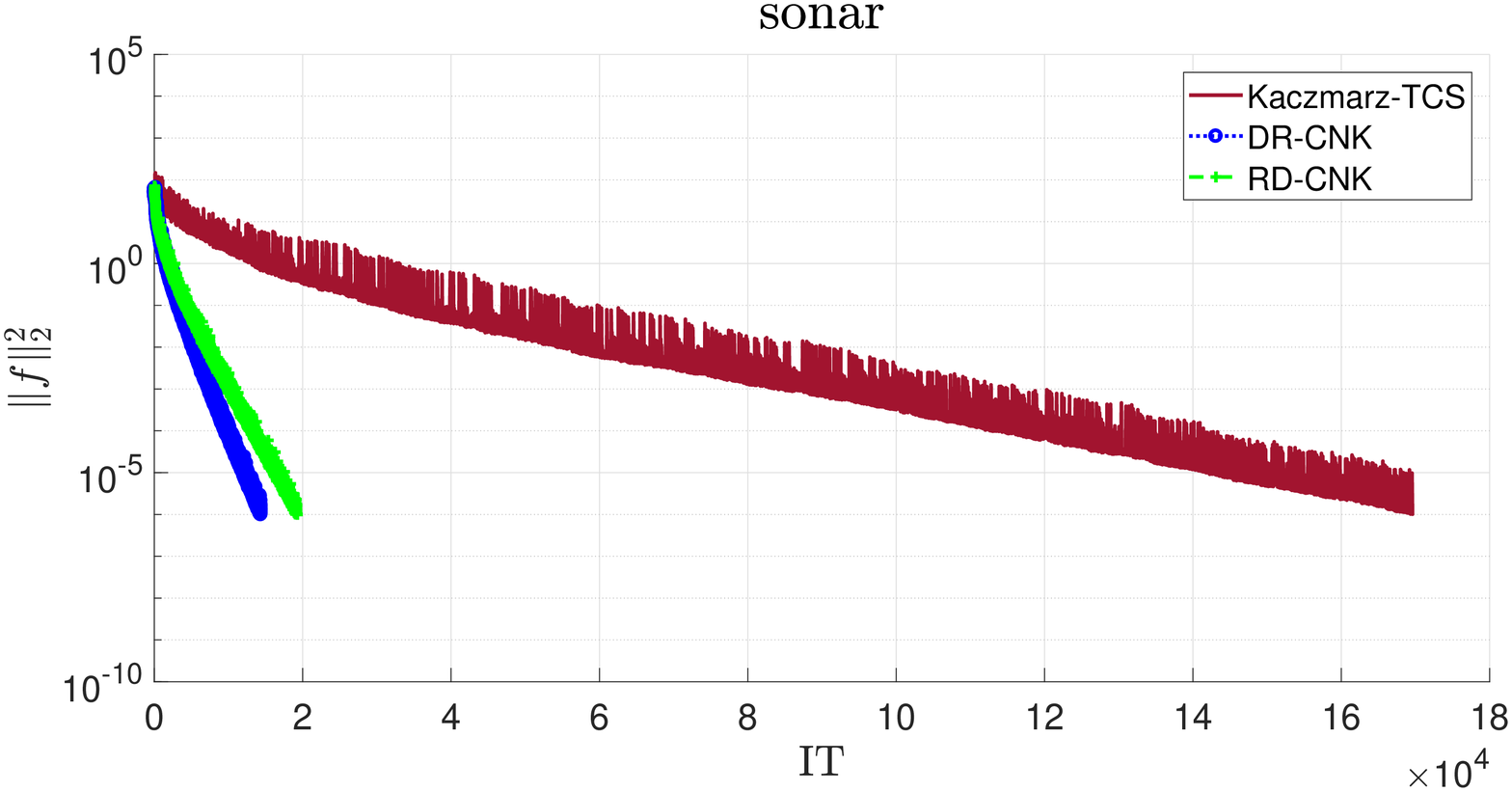}
 \end{center}
\caption{$\|f\|_2^2$ versus IT for the Kaczmarz-TCS, DR-CNK and RD-CNK methods.}\label{fig_single_sample_IT}
\end{figure}
\begin{figure}[ht]
 \begin{center}
\includegraphics [height=3.5cm,width=3.8cm ]{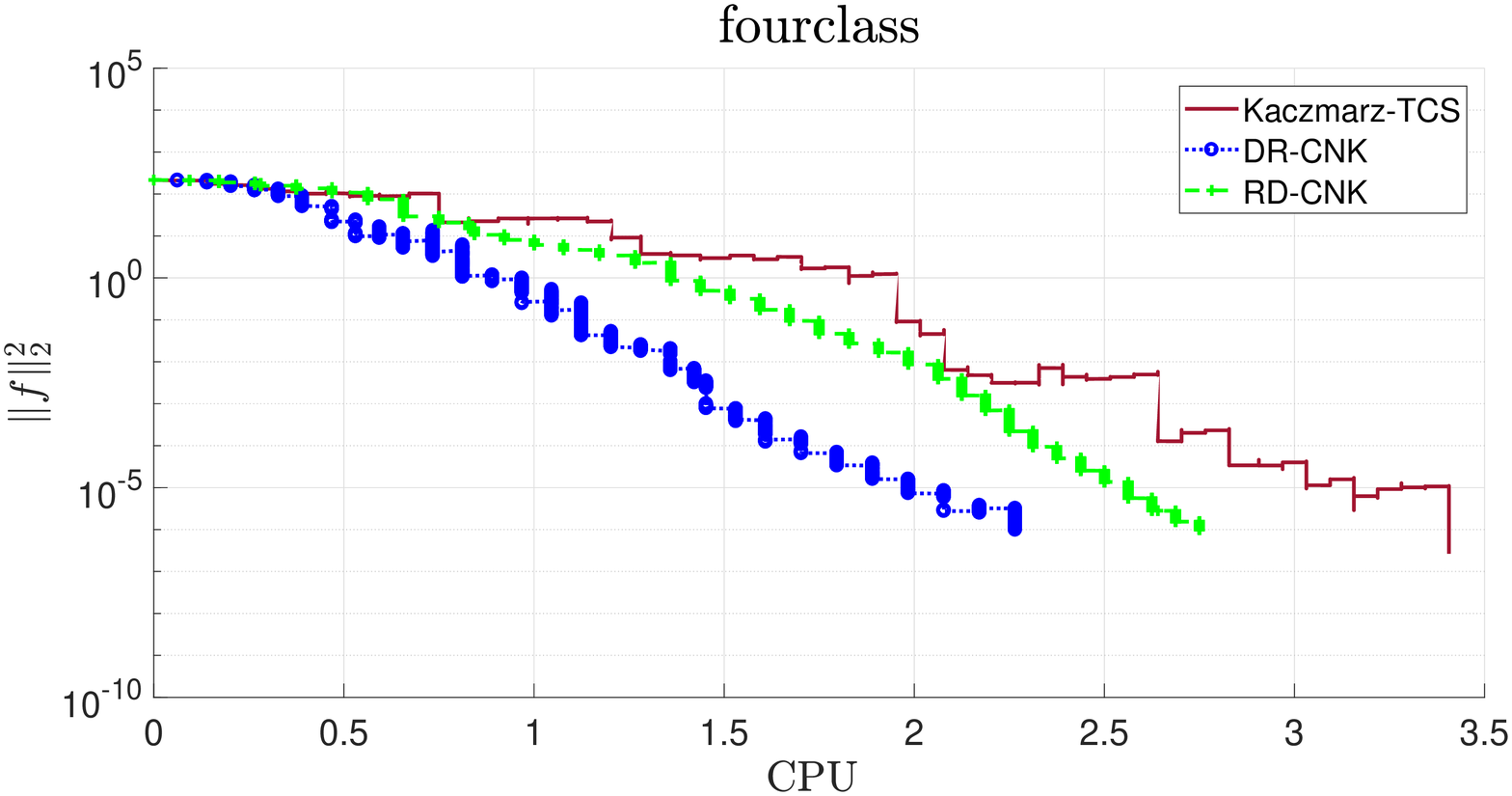}
\includegraphics [height=3.5cm,width=3.8cm ]{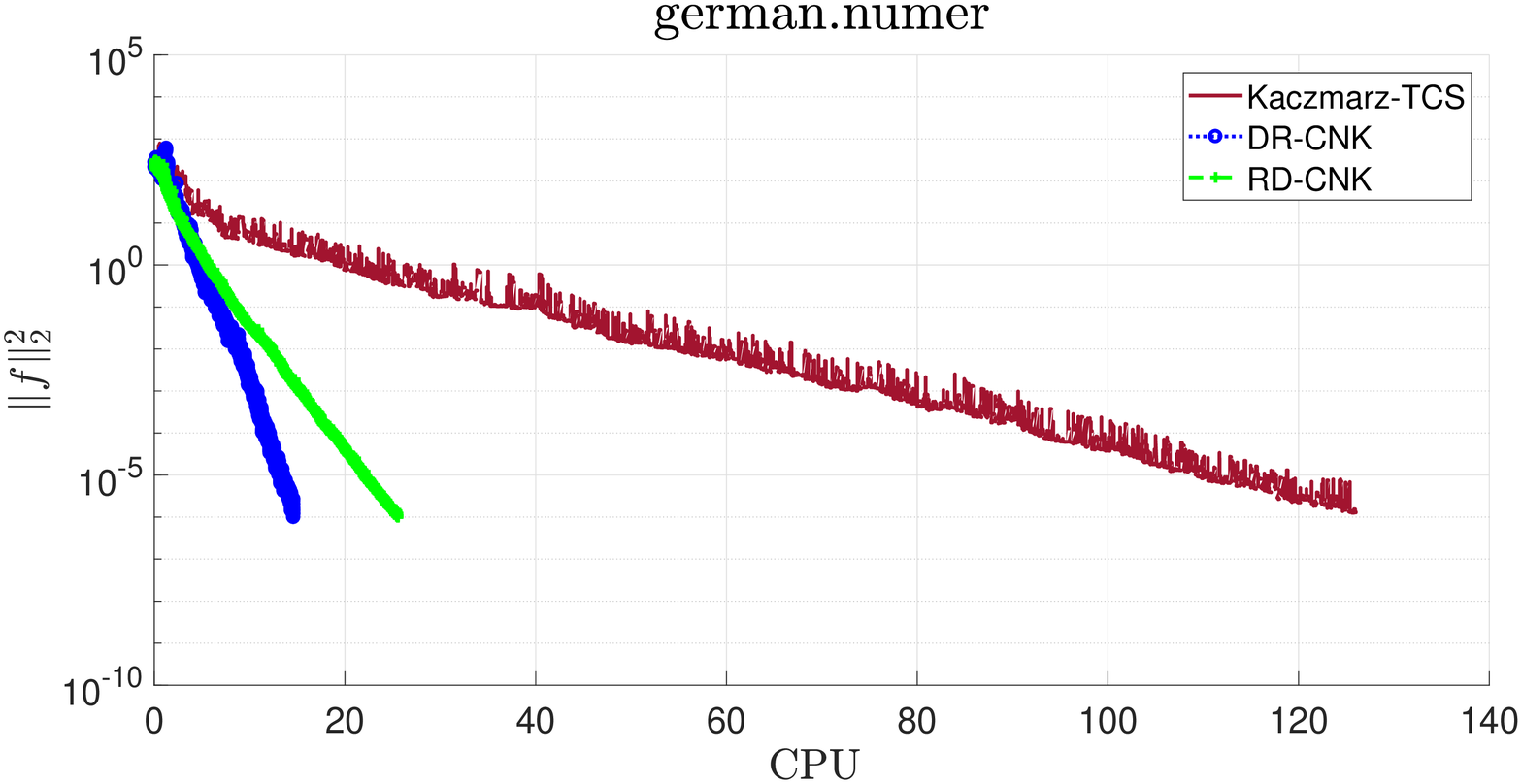}
\includegraphics [height=3.5cm,width=3.8cm ]{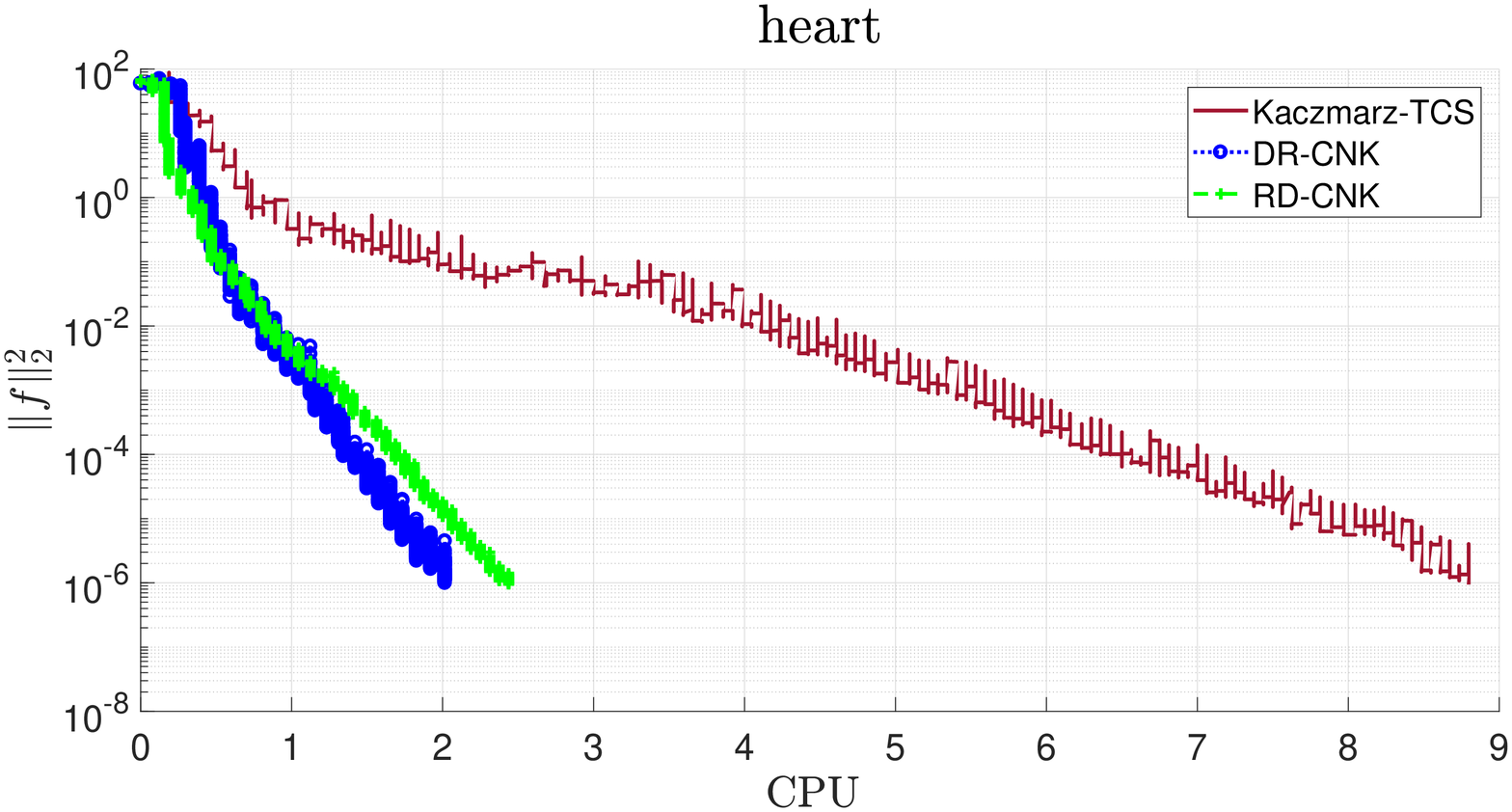}
\includegraphics [height=3.5cm,width=3.8cm ]{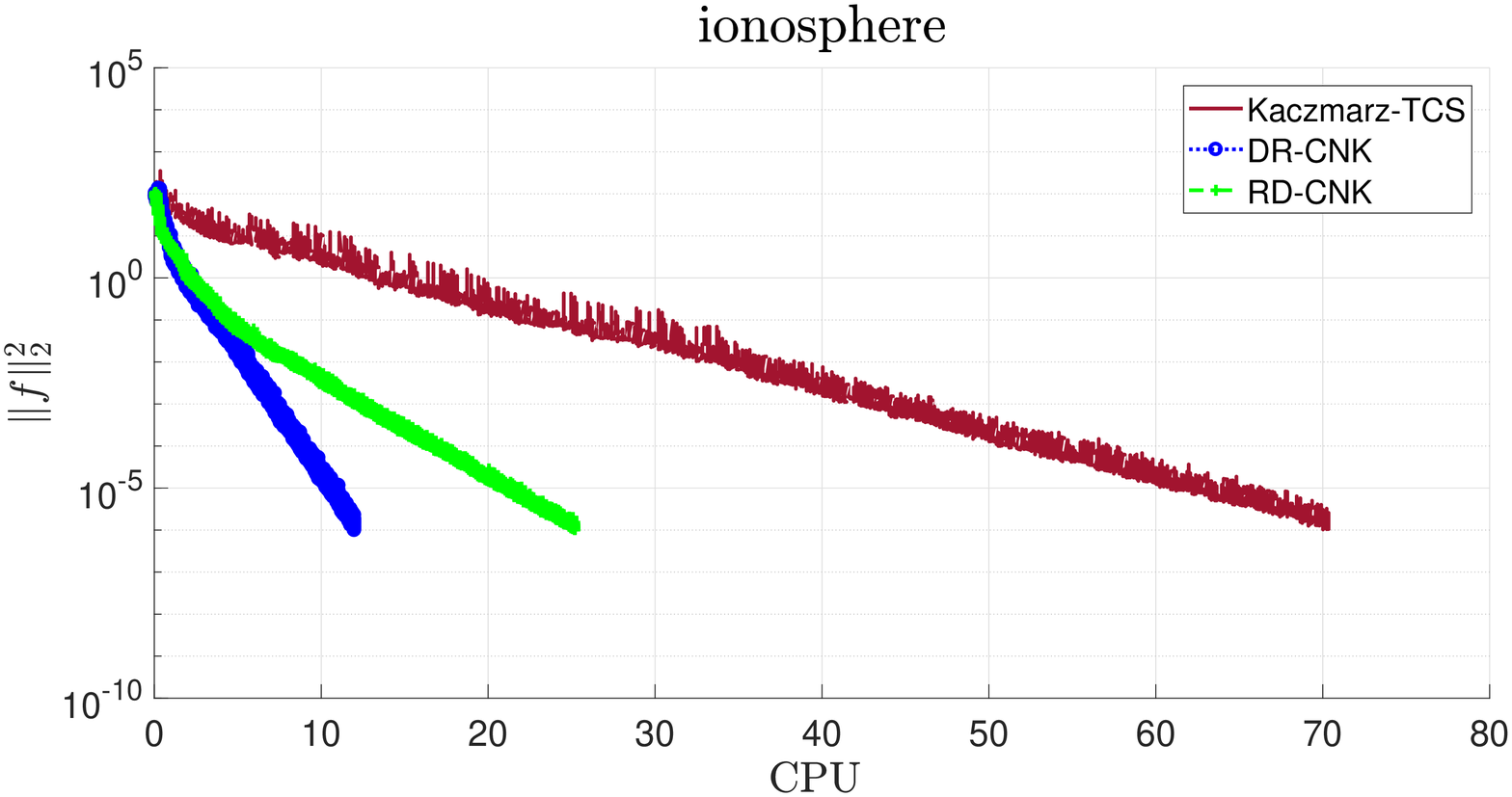}
\includegraphics [height=3.5cm,width=3.8cm ]{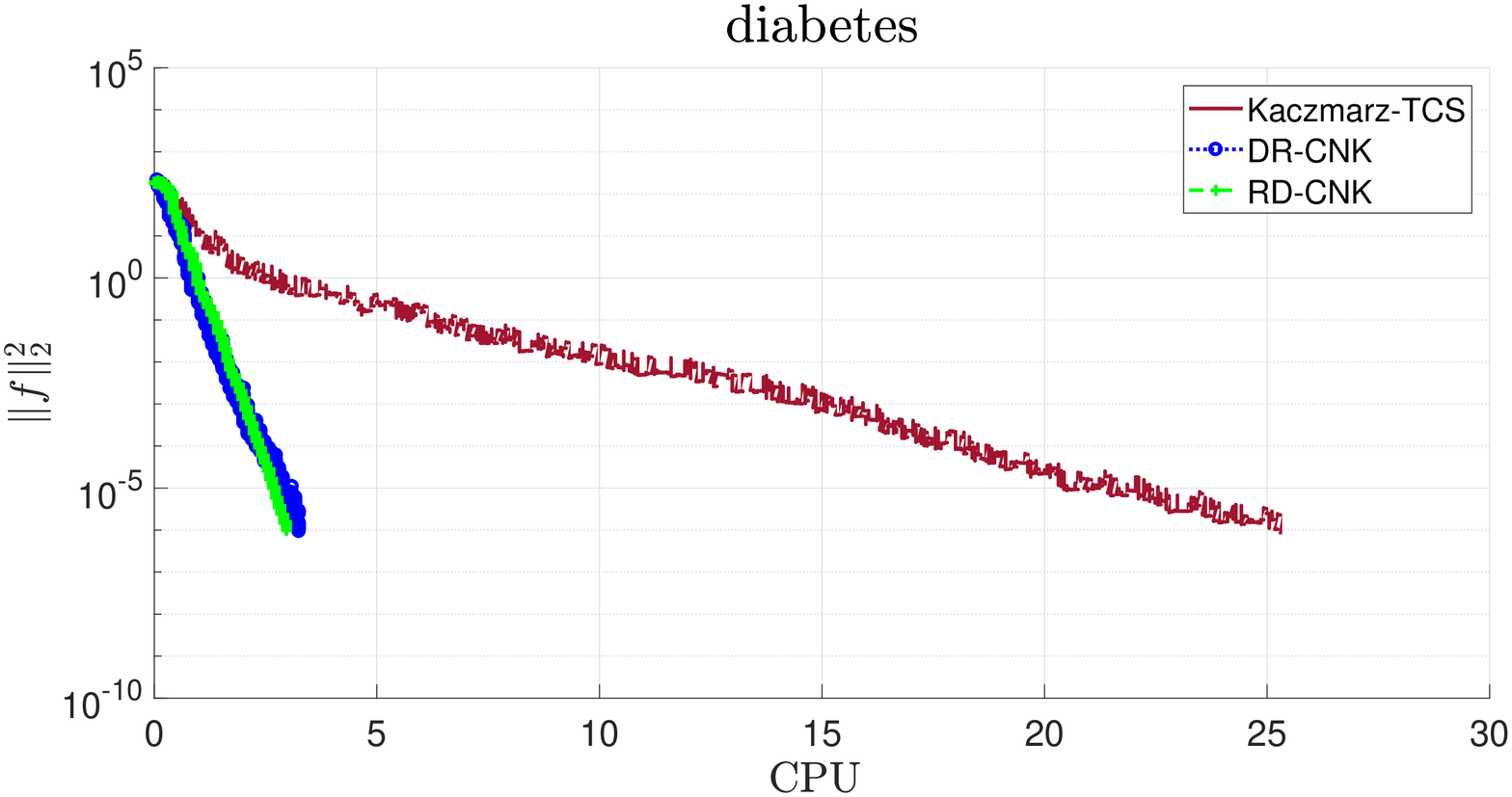}
\includegraphics [height=3.5cm,width=3.8cm ]{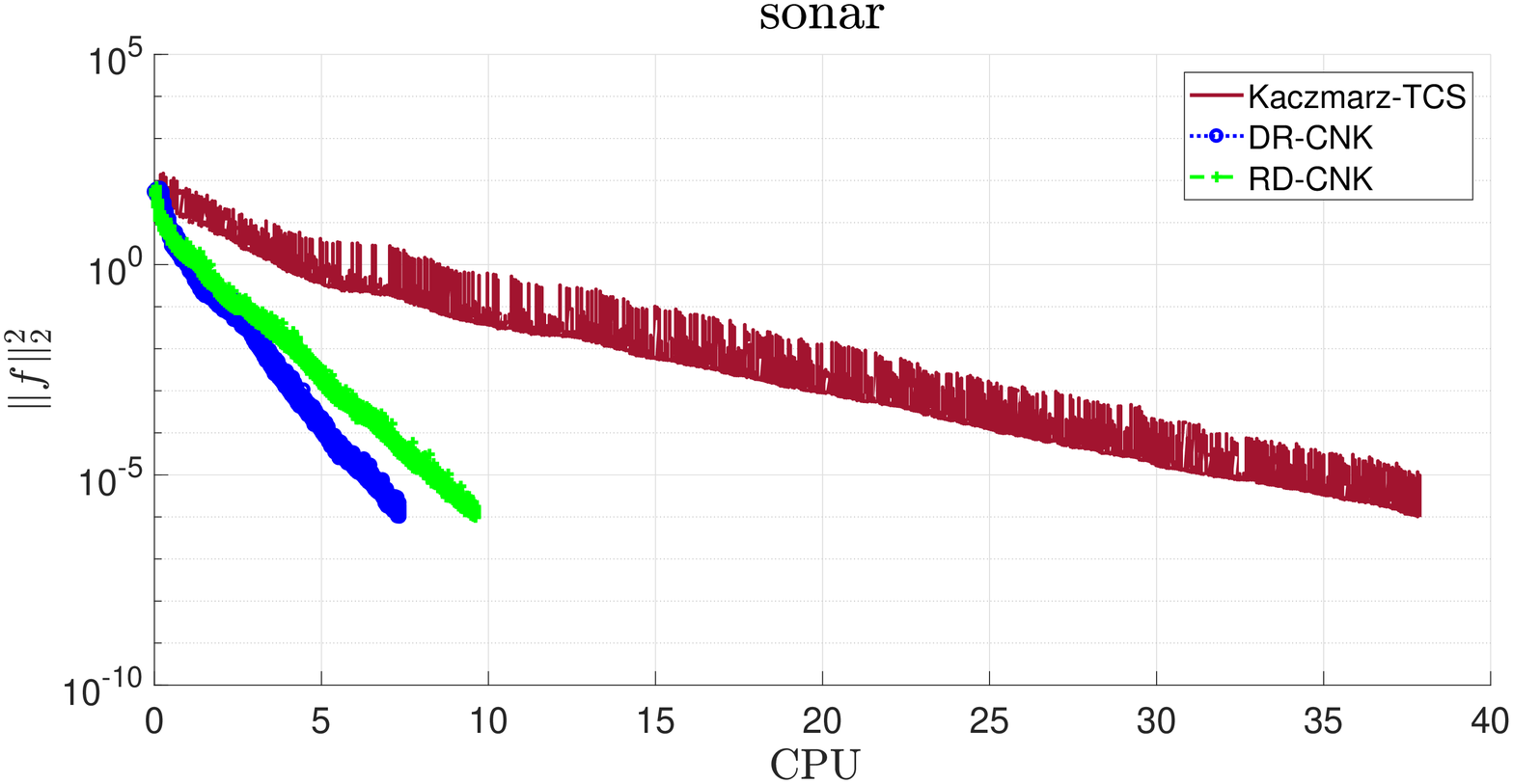}
 \end{center}
\caption{$\|f\|_2^2$ versus CPU for the Kaczmarz-TCS, DR-CNK and RD-CNK methods.}\label{fig_single_sample_CPU}
\end{figure}
\begin{figure}[ht]
 \begin{center}
\includegraphics [height=3.5cm,width=3.8cm ]{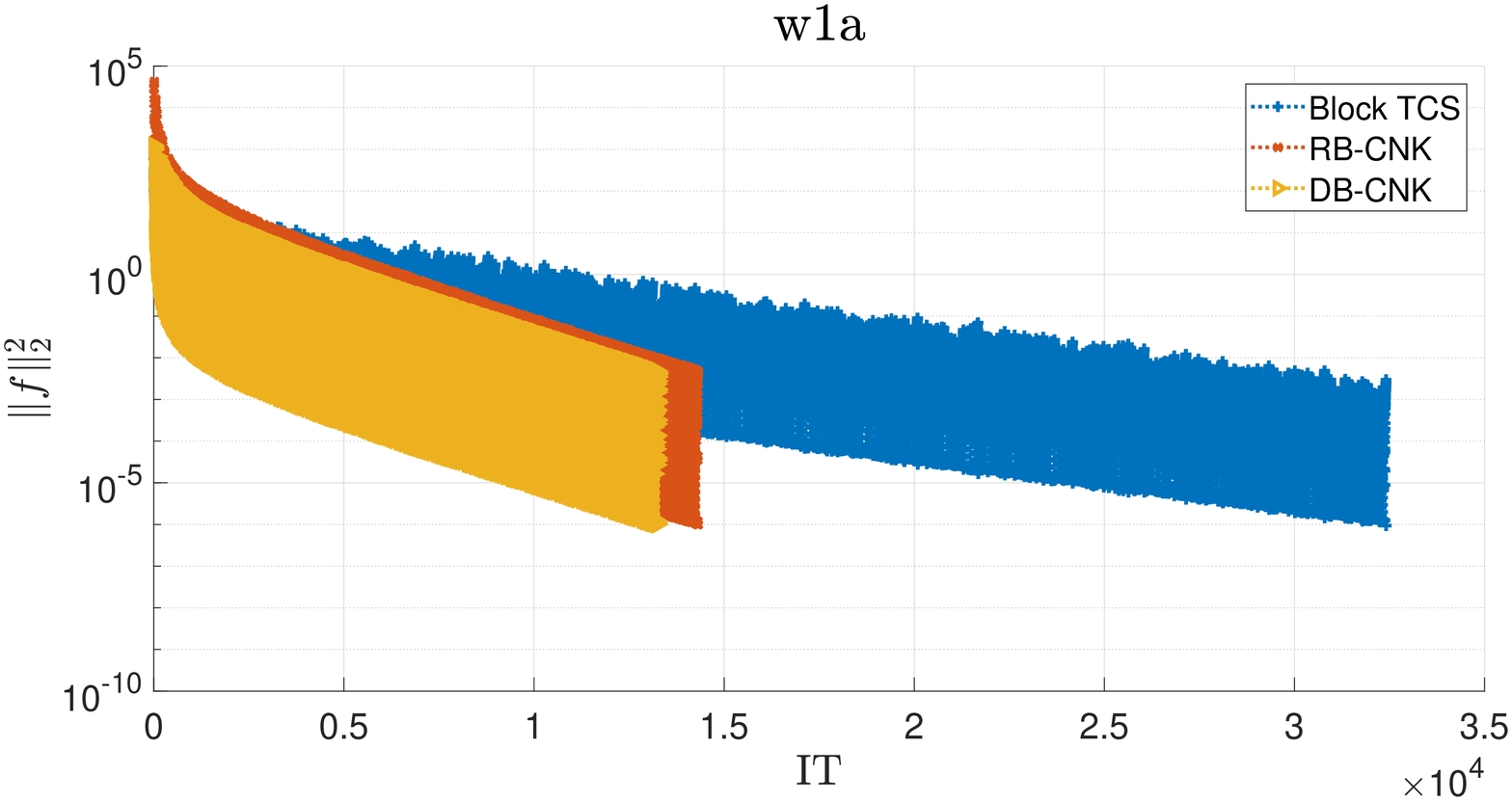}
\includegraphics [height=3.5cm,width=3.8cm ]{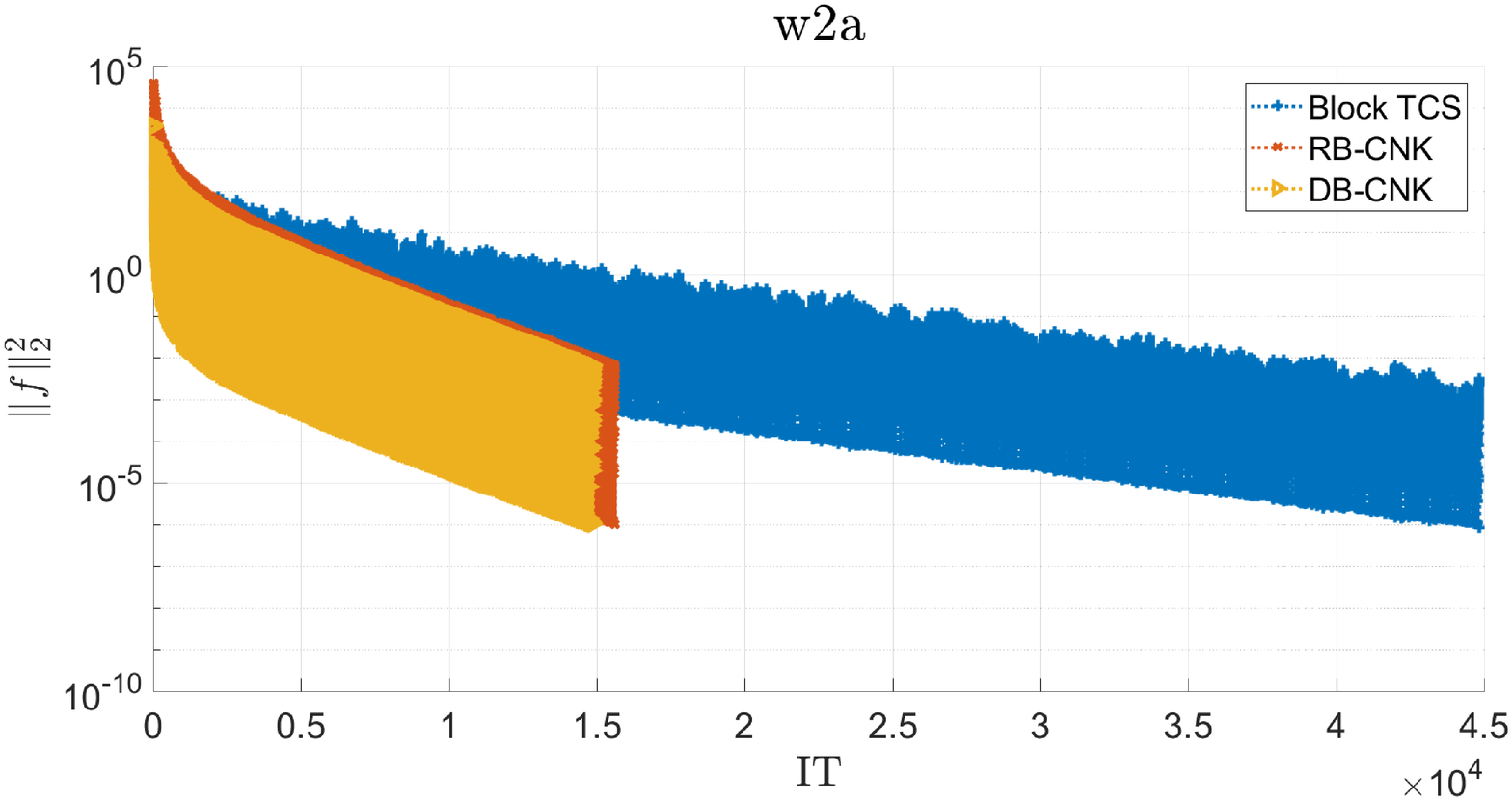}
\includegraphics [height=3.5cm,width=3.8cm ]{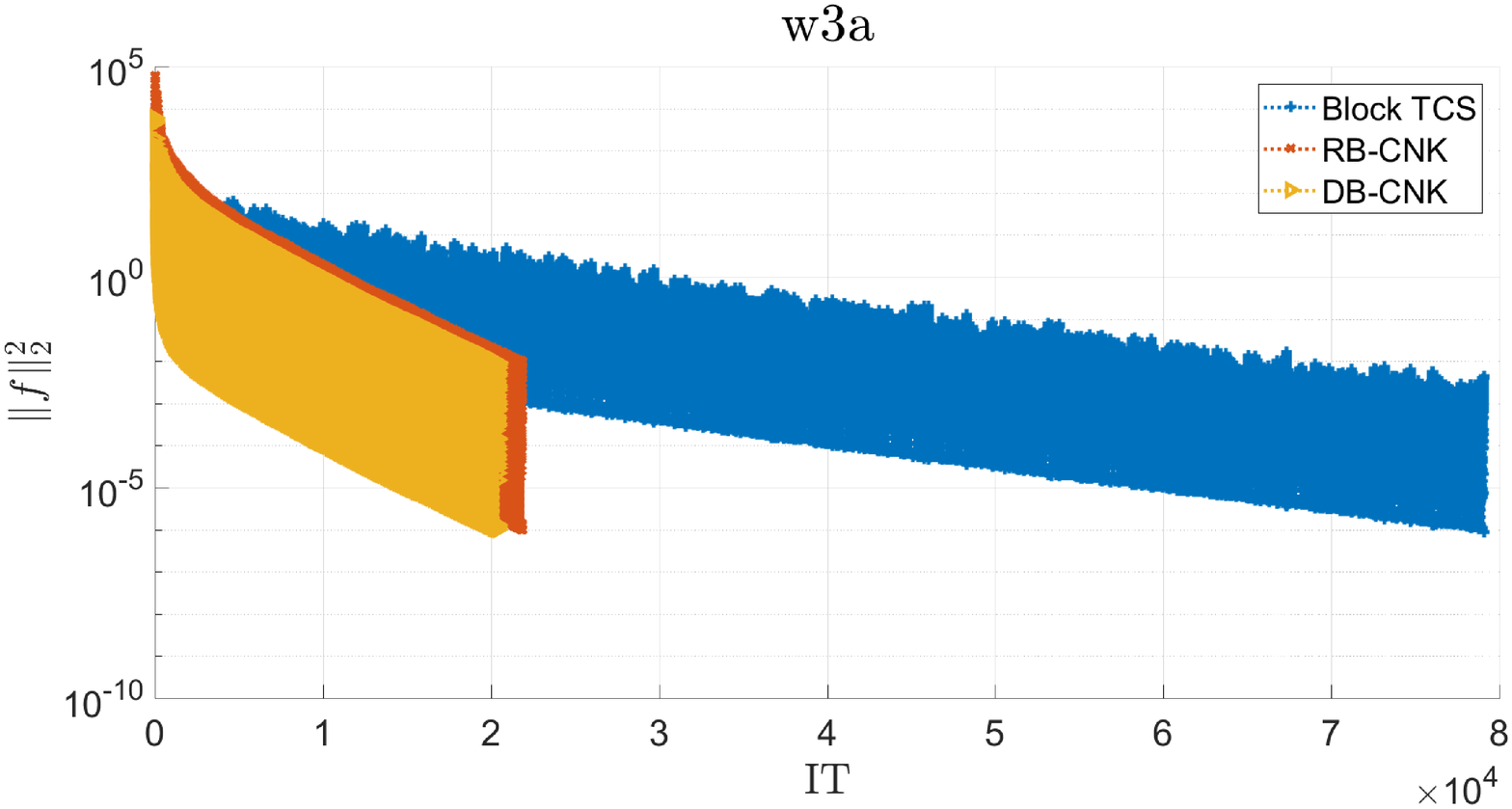}
\includegraphics [height=3.5cm,width=3.8cm ]{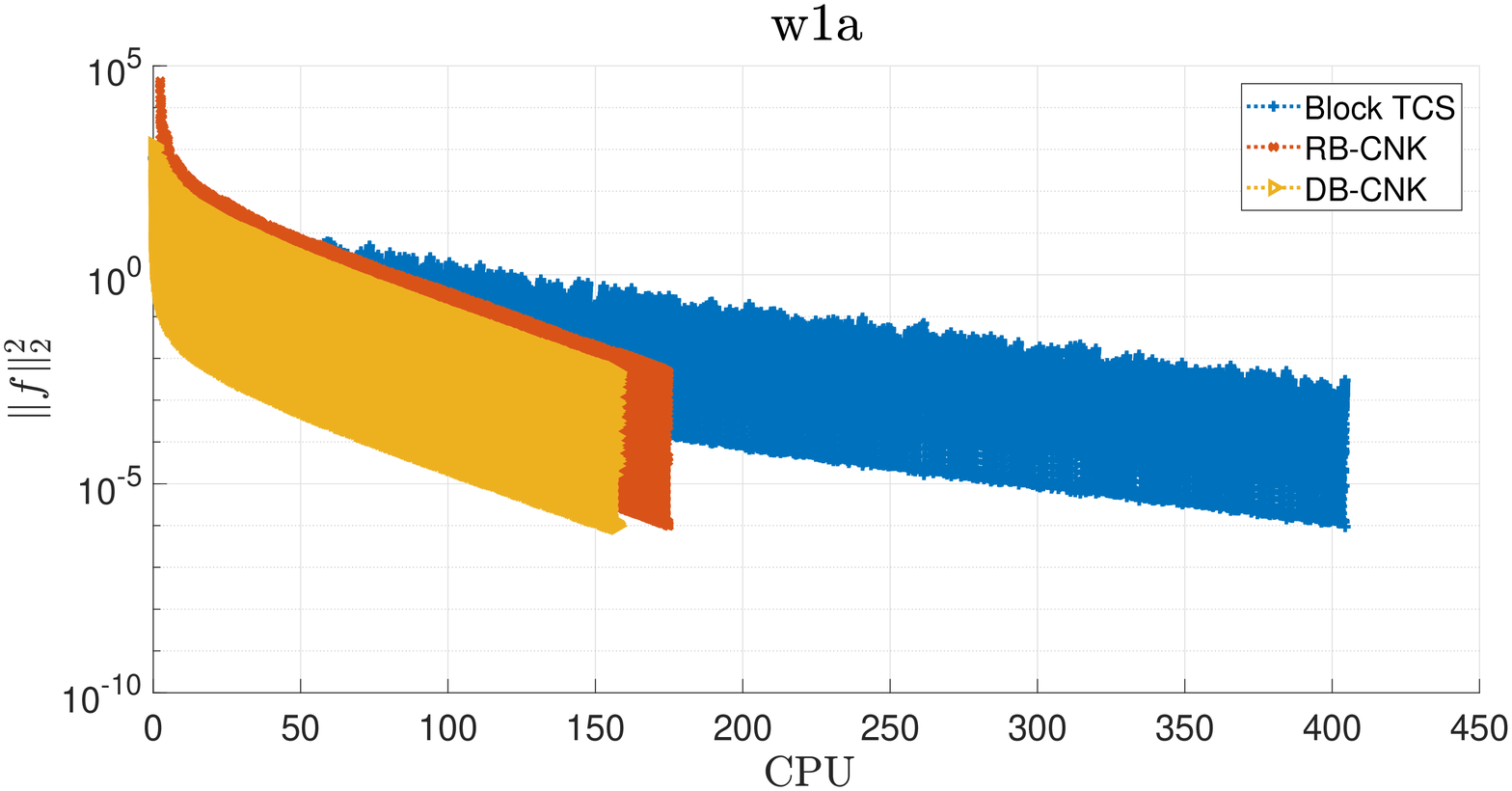}
\includegraphics [height=3.5cm,width=3.8cm ]{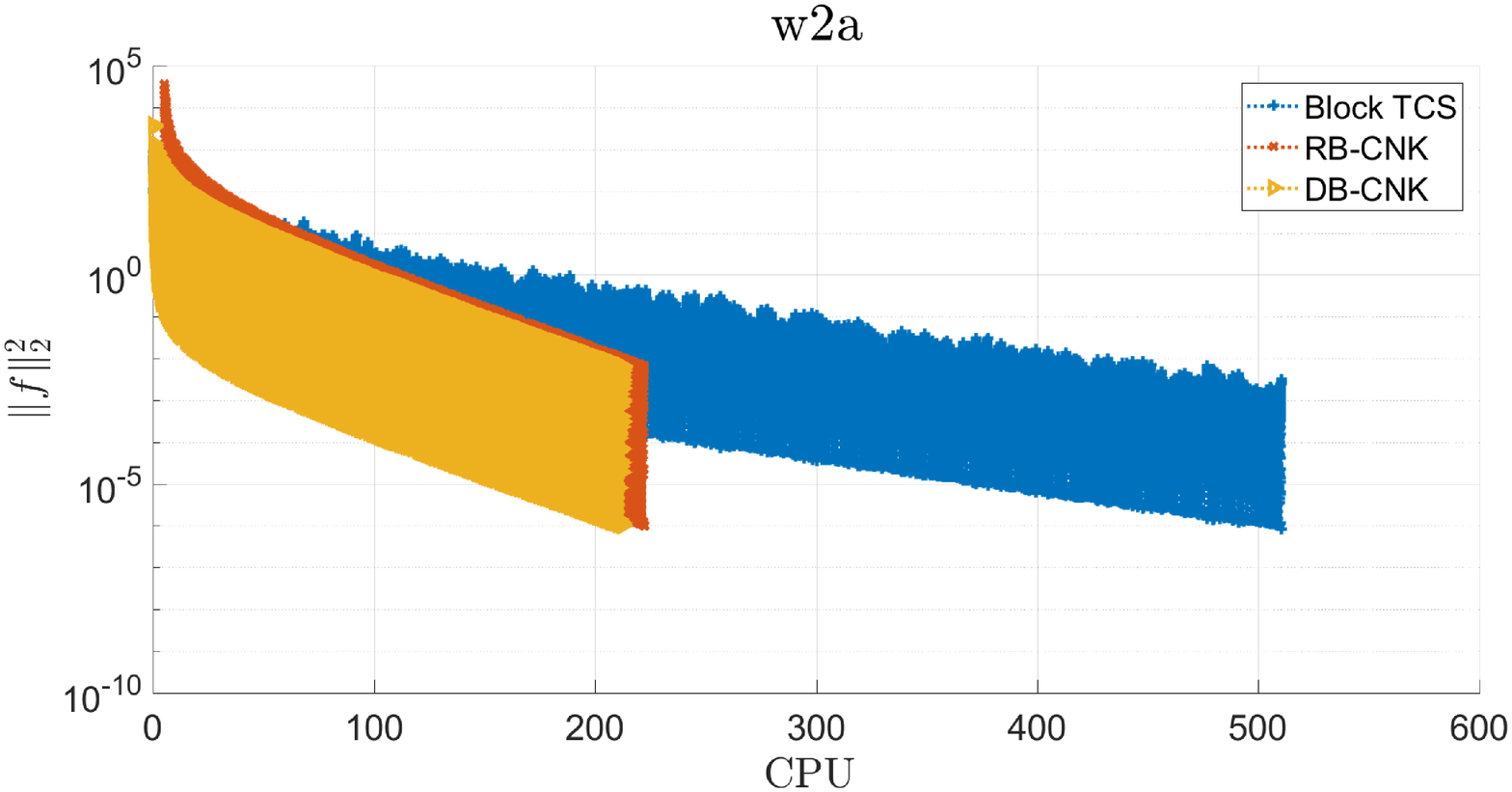}
\includegraphics [height=3.5cm,width=3.8cm ]{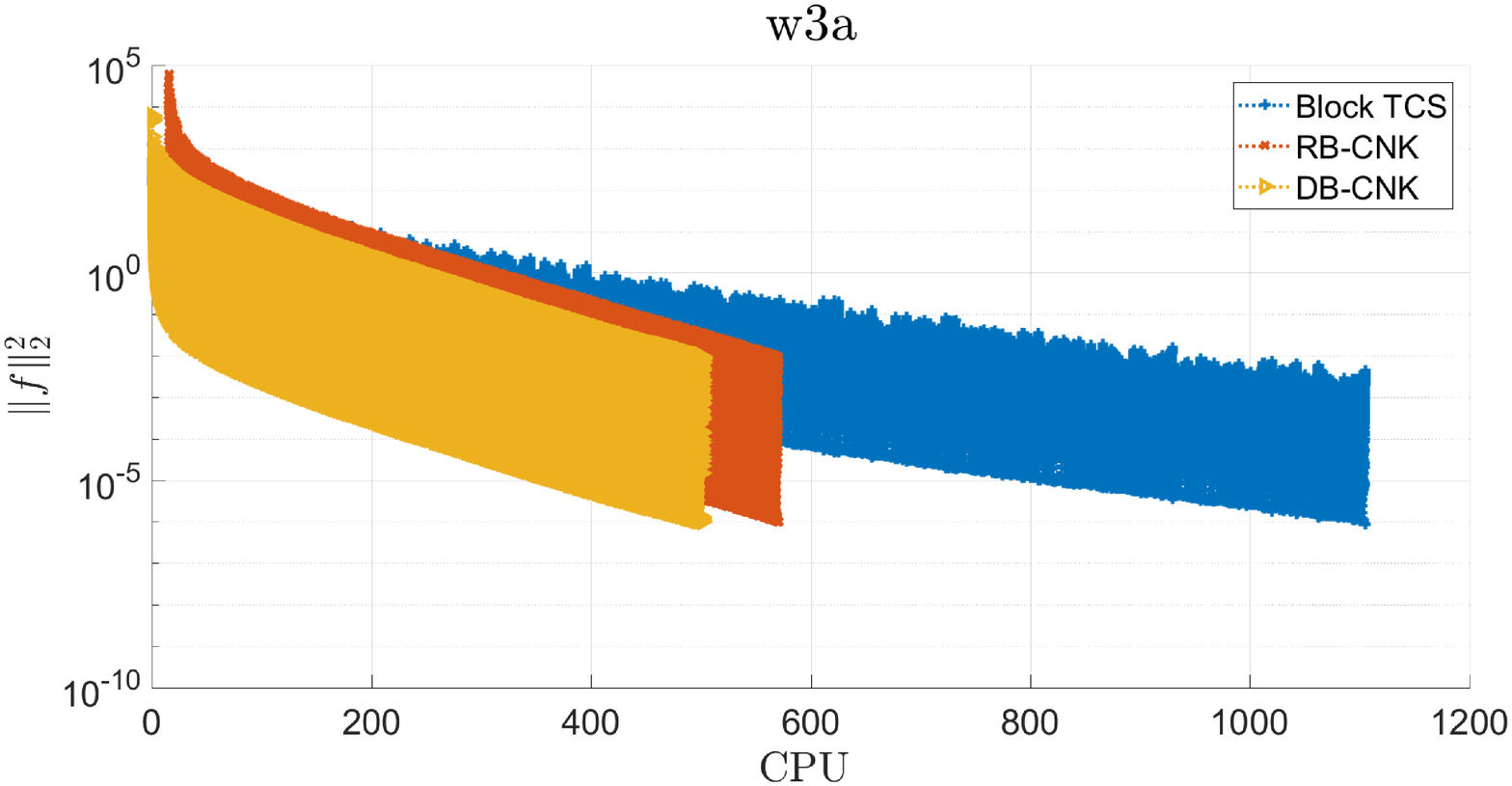}
 \end{center}
\caption{$\|f\|_2^2$ versus IT and CPU for the Block TCS, RB-CNK and DB-CNK methods.}\label{fig_multi_samples}
\end{figure}

\section{Concluding remarks}
\label{sec:conclusions}

This paper proposes two greedy capped nonlinear Kaczmarz methods, i.e., the DR-CNK and RD-CNK methods, which make up for the deficiency of the NRK method, that is, the new methods will definitely not extract the index corresponding to the small component. The theoretical analysis shows that the convergence factors of our two methods are strictly smaller than that of the NRK method, which is also verified by a large number of numerical examples. Further, we present their block versions for acceleration.

Considering the efficiency of the capped threshold, other threshold strategies can be further explored. In addition, although the new block methods perform well, we cannot determine the specific size of the index set in each iteration, which may lead to extreme cases. That is, either there is only one index in the set, or all indices, which is inconsistent with the original intention of the block sampling iteration. Therefore, how to design a more reasonable index set is also worth further discussion.
%

\section*{Declarations}

\bmhead{Ethical Approval}
Not Applicable

\bmhead{Availability of supporting data} The data that support the findings of this study are available from the corresponding author upon reasonable request.

\bmhead{Competing Interests}
The authors declare that they have no conflict of interest.

\bmhead{Funding}
This work was supported by the National Natural Science Foundation of China (No. 11671060) and the Natural Science Foundation Project of CQ CSTC (No. cstc2019jcyj-msxmX0267)
\bmhead{Authors' contributions}
All the authors contributed to the study conception and design, and read and
approved the final manuscript.
\bmhead{Acknowledgments}
Not Applicable

\bibliography{sn-bibliography}


\begin{thebibliography}{19}
\ifx \bisbn   \undefined \def \bisbn  #1{ISBN #1}\fi
\ifx \binits  \undefined \def \binits#1{#1}\fi
\ifx \bauthor  \undefined \def \bauthor#1{#1}\fi
\ifx \batitle  \undefined \def \batitle#1{#1}\fi
\ifx \bjtitle  \undefined \def \bjtitle#1{#1}\fi
\ifx \bvolume  \undefined \def \bvolume#1{\textbf{#1}}\fi
\ifx \byear  \undefined \def \byear#1{#1}\fi
\ifx \bissue  \undefined \def \bissue#1{#1}\fi
\ifx \bfpage  \undefined \def \bfpage#1{#1}\fi
\ifx \blpage  \undefined \def \blpage #1{#1}\fi
\ifx \burl  \undefined \def \burl#1{\textsf{#1}}\fi
\ifx \doiurl  \undefined \def \doiurl#1{\url{https://doi.org/#1}}\fi
\ifx \betal  \undefined \def \betal{\textit{et al.}}\fi
\ifx \binstitute  \undefined \def \binstitute#1{#1}\fi
\ifx \binstitutionaled  \undefined \def \binstitutionaled#1{#1}\fi
\ifx \bctitle  \undefined \def \bctitle#1{#1}\fi
\ifx \beditor  \undefined \def \beditor#1{#1}\fi
\ifx \bpublisher  \undefined \def \bpublisher#1{#1}\fi
\ifx \bbtitle  \undefined \def \bbtitle#1{#1}\fi
\ifx \bedition  \undefined \def \bedition#1{#1}\fi
\ifx \bseriesno  \undefined \def \bseriesno#1{#1}\fi
\ifx \blocation  \undefined \def \blocation#1{#1}\fi
\ifx \bsertitle  \undefined \def \bsertitle#1{#1}\fi
\ifx \bsnm \undefined \def \bsnm#1{#1}\fi
\ifx \bsuffix \undefined \def \bsuffix#1{#1}\fi
\ifx \bparticle \undefined \def \bparticle#1{#1}\fi
\ifx \barticle \undefined \def \barticle#1{#1}\fi
\bibcommenthead
\ifx \bconfdate \undefined \def \bconfdate #1{#1}\fi
\ifx \botherref \undefined \def \botherref #1{#1}\fi
\ifx \url \undefined \def \url#1{\textsf{#1}}\fi
\ifx \bchapter \undefined \def \bchapter#1{#1}\fi
\ifx \bbook \undefined \def \bbook#1{#1}\fi
\ifx \bcomment \undefined \def \bcomment#1{#1}\fi
\ifx \oauthor \undefined \def \oauthor#1{#1}\fi
\ifx \citeauthoryear \undefined \def \citeauthoryear#1{#1}\fi
\ifx \endbibitem  \undefined \def \endbibitem {}\fi
\ifx \bconflocation  \undefined \def \bconflocation#1{#1}\fi
\ifx \arxivurl  \undefined \def \arxivurl#1{\textsf{#1}}\fi
\csname PreBibitemsHook\endcsname

\bibitem{chen2019homotopy}
\begin{barticle}
\bauthor{\bsnm{Chen}, \binits{Q.}},
\bauthor{\bsnm{Hao}, \binits{W.}}:
\batitle{A homotopy training algorithm for fully connected neural networks}.
\bjtitle{Proc. R. Soc. Lond. Ser. A Math. Phys. Eng. Sci.}
\bvolume{475},
\bfpage{20190662}
(\byear{2019})
\end{barticle}
\endbibitem

\bibitem{hao2014bootstrapping}
\begin{barticle}
\bauthor{\bsnm{Hao}, \binits{W.}},
\bauthor{\bsnm{Hauenstein}, \binits{J.D.}},
\bauthor{\bsnm{Hu}, \binits{B.}},
\bauthor{\bsnm{Sommese}, \binits{A.J.}}:
\batitle{A bootstrapping approach for computing multiple solutions of
  differential equations}.
\bjtitle{J. Comput. Appl. Math.}
\bvolume{258},
\bfpage{181}--\blpage{190}
(\byear{2014})
\end{barticle}
\endbibitem

\bibitem{hao2018homotopy}
\begin{barticle}
\bauthor{\bsnm{Hao}, \binits{W.}}:
\batitle{A homotopy method for parameter estimation of nonlinear differential
  equations with multiple optima}.
\bjtitle{J. Sci. Comput.}
\bvolume{74},
\bfpage{1314}--\blpage{1324}
(\byear{2018})
\end{barticle}
\endbibitem

\bibitem{dennis1977quasi}
\begin{barticle}
\bauthor{\bsnm{Dennis}, \binits{J.E.} \bsuffix{Jr}},
\bauthor{\bsnm{Mor{\'e}}, \binits{J.J.}}:
\batitle{{Quasi--Newton methods, motivation and theory}}.
\bjtitle{SIAM review}
\bvolume{19},
\bfpage{46}--\blpage{89}
(\byear{1977})
\end{barticle}
\endbibitem

\bibitem{ortega2000iterative}
\begin{bbook}
\bauthor{\bsnm{Ortega}, \binits{J.M.}},
\bauthor{\bsnm{Rheinboldt}, \binits{W.C.}}:
\bbtitle{Iterative Solution of Nonlinear Equations in Several Variables}.
\bpublisher{SIAM},
\blocation{Philadelphia}
(\byear{2000})
\end{bbook}
\endbibitem

\bibitem{yamashita2001rate}
\begin{bchapter}
\bauthor{\bsnm{Yamashita}, \binits{N.}},
\bauthor{\bsnm{Fukushima}, \binits{M.}}:
\bctitle{{On the rate of convergence of the Levenberg--Marquardt method}}.
In: \bbtitle{Topics in Numerical Analysis},
pp. \bfpage{239}--\blpage{249}.
\bpublisher{Springer},
\blocation{Wien}
(\byear{2001})
\end{bchapter}
\endbibitem

\bibitem{yuan2022sketched}
\begin{barticle}
\bauthor{\bsnm{Yuan}, \binits{R.}},
\bauthor{\bsnm{Lazaric}, \binits{A.}},
\bauthor{\bsnm{Gower}, \binits{R.M.}}:
\batitle{{Sketched Newton--Raphson}}.
\bjtitle{SIAM J. Optim.}
\bvolume{32},
\bfpage{1555}--\blpage{1583}
(\byear{2022})
\end{barticle}
\endbibitem

\bibitem{zeng2020successive}
\begin{botherref}
\oauthor{\bsnm{Zeng}, \binits{W.}},
\oauthor{\bsnm{Ye}, \binits{J.}}:
Successive projection for solving systems of nonlinear equations/inequalities.
arXiv preprint arXiv:2012.07555
(2020)
\end{botherref}
\endbibitem

\bibitem{wang2022nonlinear}
\begin{barticle}
\bauthor{\bsnm{Wang}, \binits{Q.}},
\bauthor{\bsnm{Li}, \binits{W.}},
\bauthor{\bsnm{Bao}, \binits{W.}},
\bauthor{\bsnm{Gao}, \binits{X.}}:
\batitle{{Nonlinear Kaczmarz algorithms and their convergence}}.
\bjtitle{J. Comput. Appl. Math.}
\bvolume{399},
\bfpage{113720}
(\byear{2022})
\end{barticle}
\endbibitem

\bibitem{Bai2018}
\begin{barticle}
\bauthor{\bsnm{Bai}, \binits{Z.}},
\bauthor{\bsnm{Wu}, \binits{W.}}:
\batitle{{On greedy randomized Kaczmarz method for solving large sparse linear
  systems}}.
\bjtitle{SIAM J. Sci. Comput.}
\bvolume{40},
\bfpage{592}--\blpage{606}
(\byear{2018})
\end{barticle}
\endbibitem

\bibitem{gower2021adaptive}
\begin{barticle}
\bauthor{\bsnm{Gower}, \binits{R.M.}},
\bauthor{\bsnm{Molitor}, \binits{D.}},
\bauthor{\bsnm{Moorman}, \binits{J.}},
\bauthor{\bsnm{Needell}, \binits{D.}}:
\batitle{On adaptive sketch--and--project for solving linear systems}.
\bjtitle{SIAM J. Matrix Anal. Appl.}
\bvolume{42}(\bissue{2}),
\bfpage{954}--\blpage{989}
(\byear{2021})
\end{barticle}
\endbibitem

\bibitem{zhang2020greedy}
\begin{botherref}
\oauthor{\bsnm{Zhang}, \binits{Y.}},
\oauthor{\bsnm{Li}, \binits{H.}}:
{Greedy Motzkin--Kaczmarz methods for solving linear systems}.
Numer. Linear Algebra Appl.,
2429
(2022)
\end{botherref}
\endbibitem

\bibitem{zhang2022greedy}
\begin{botherref}
\oauthor{\bsnm{Zhang}, \binits{Y.}},
\oauthor{\bsnm{Li}, \binits{H.}},
\oauthor{\bsnm{Tang}, \binits{L.}}:
{Greedy randomized sampling nonlinear Kaczmarz methods}.
arXiv preprint arXiv:2209.06082
(2022)
\end{botherref}
\endbibitem

\bibitem{2017Kaczmarz}
\begin{barticle}
\bauthor{\bsnm{Scherzer}, \binits{O.}},
\bauthor{\bsnm{Antonio}},
\bauthor{\bsnm{Kowar}, \binits{R.}},
\bauthor{\bsnm{Haltmeier}, \binits{M.}}:
\batitle{Kaczmarz methods for regularizing nonlinear ill-posed equations ii:
  Applications}.
\bjtitle{Inverse Problems and Imaging}
\bvolume{1},
\bfpage{507}--\blpage{523}
(\byear{2017})
\end{barticle}
\endbibitem

\bibitem{Bai2018r}
\begin{barticle}
\bauthor{\bsnm{Bai}, \binits{Z.}},
\bauthor{\bsnm{Wu}, \binits{W.}}:
\batitle{{On relaxed greedy randomized Kaczmarz methods for solving large
  sparse linear systems}}.
\bjtitle{Appl. Math. Lett.}
\bvolume{83},
\bfpage{21}--\blpage{26}
(\byear{2018})
\end{barticle}
\endbibitem

\bibitem{niu2020greedy}
\begin{barticle}
\bauthor{\bsnm{Niu}, \binits{Y.}},
\bauthor{\bsnm{Zheng}, \binits{B.}}:
\batitle{{A greedy block Kaczmarz algorithm for solving large--scale linear
  systems}}.
\bjtitle{Appl. Math. Lett.}
\bvolume{104},
\bfpage{106294}
(\byear{2020})
\end{barticle}
\endbibitem

\bibitem{more1981testing}
\begin{barticle}
\bauthor{\bsnm{Mor{\'e}}, \binits{J.J.}},
\bauthor{\bsnm{Garbow}, \binits{B.S.}},
\bauthor{\bsnm{Hillstrom}, \binits{K.E.}}:
\batitle{Testing unconstrained optimization software}.
\bjtitle{ACM Transactions on Mathematical Software (TOMS)}
\bvolume{7},
\bfpage{17}--\blpage{41}
(\byear{1981})
\end{barticle}
\endbibitem

\bibitem{chang2011libsvm}
\begin{barticle}
\bauthor{\bsnm{Chang}, \binits{C.C.}},
\bauthor{\bsnm{Lin}, \binits{C.J.}}:
\batitle{Libsvm: a library for support vector machines}.
\bjtitle{ACM transactions on intelligent systems and technology (TIST)}
\bvolume{2}(\bissue{3}),
\bfpage{1}--\blpage{27}
(\byear{2011})
\end{barticle}
\endbibitem

\bibitem{yuan2020sketched}
\begin{botherref}
\oauthor{\bsnm{Yuan}, \binits{R.}},
\oauthor{\bsnm{Lazaric}, \binits{A.}},
\oauthor{\bsnm{Gower}, \binits{R.M.}}:
{Sketched Newton--Raphson}.
arXiv preprint arXiv:2006.12120
(2020)
\end{botherref}
\endbibitem

\end{thebibliography}


\end{document}